\numberwithin{equation}{section}
\newtheorem{theorem}{Theorem}[section]
\newtheorem{proposition}[theorem]{Proposition}
\newtheorem{corollary}[theorem]{Corollary}
\newtheorem{lemma}[theorem]{Lemma}
\theoremstyle{definition}
\newtheorem{definition}[theorem]{Definition}
\theoremstyle{remark}
\newtheorem{remark}[theorem]{Remark}
\newcommand{\be}{\begin{equation}}
\newcommand{\ee}{\end{equation}}
\begin{document}

\title[ scattering for fNLS
 ]
{  Sharp Criteria of Scattering for the Fractional NLS
  }

 \author{Qing Guo and Shihui Zhu
}

\address{College of Science, Minzu University of China, Beijing 100081, China}
\email{guoqing0117@163.com}

\address{Department of Mathematics,
Sichuan Normal University\\ Chengdu, Sichuan 610066, China}
\email{ shihuizhumath@163.com}

\begin{abstract}
In this paper, the sharp threshold of  scattering for the fractional nonlinear Schr\"{o}dinger equation in the $L^2$-supercritical  case
is obtained, i.e.,  if  $1+\frac{4s}{N}<p<1+\frac{4s}{N-2s}$,  and $$ M[u_{0}]^{\frac{s-s_c}{s_c}}E[u_{0}]<M[Q]^{\frac{s-s_c}{s_c}}E[Q],
\ M[u_{0}]^{\frac{s-s_c}{s_c}}\|
u_{0}\|^2_{\dot H^s}<M[Q]^{\frac{s-s_c}{s_c}}\| Q\|^2_{\dot H^s}$$ then the solution $u(t)$ is globally well-posed and scatters. This condition is sharp in the sense that if $1+\frac{4s}{N}<p<1+\frac{4s}{N-2s}$ and $$ M[u_{0}]^{\frac{s-s_c}{s_c}}E[u_{0}]<M[Q]^{\frac{s-s_c}{s_c}}E[Q],
\ M[u_{0}]^{\frac{s-s_c}{s_c}}\|
u_{0}\|^2_{\dot H^s}>M[Q]^{\frac{s-s_c}{s_c}}\| Q\|^2_{\dot H^s},$$ then the corresponding solution $u(t)$ blows up in finite time, according to Boulenger, Himmelsbach, and Lenzmann's results in 
 \cite{b-h-ljfa16}.
\end{abstract}

\maketitle
MSC:  35Q55, 47J30\\
Keywords: Fractional Schr\"{o}dinger equation; Power-type nonlinearity; $L^2$-supercritical;  Scattering.

\section{Introduction}

From expanding the
Feynman path integral from the Brownian-like to the L\'evy-like quantum mechanical paths, Laskin  in \cite{Laskin2000,Laskin2002}  established the fractional  Schr\"{o}dinger equations  from the viewpoint of  Physics, which  have physical applications in the energy spectrum for a hydrogen-like atom-fractional Bohr atom. The studying of the  fractional nonlinear Schr\"{o}dinger equations (fractional NLS, for short) attacking more and more Mathematical researchers (see\cite{BaoDong2011,b-h-ljfa16,ChoHwangHajaiejOzawa2012,ElgartSchlein2007,Frank-Lenzmann2013,FrohlichLenzmann2007,Guohuo2011,Hajaiej2013,Ionescu-Pusateri2014,LenzmannLewin2010,sun-zhengarxiv}). In the present paper,
we   investigate  the  following  Cauchy problem of  the $L^2$-supercritical fractional NLS.
\begin{equation}\label{eq1} iu_t-(-\triangle)^s
u+|u|^{p-1}u=0, \ \ \  \end{equation}
\begin{equation} \label{1.2} u(0,x)=u_{0}\in H^s,
\end{equation}
where $0<s<1$   and the fractional operator $(-\triangle)^{s}$ is defined by
\[(-\triangle)^{s}u=\frac{1}{(2\pi)^{\frac N2}}\int  e^{ix\cdot\xi}|\xi|^{2s}\widehat{u}(\xi)d\xi=\mathcal{F}^{-1}[|\xi|^{2s}\mathcal{F}[u](\xi)],\]   where $\mathcal{F}$ and
$\mathcal{F}^{-1}$ are the Fourier transform and the Fourier inverse
transform in $\mathbb{R}^N$, respectively.  $u=u(t,x)$: $\mathbb{R}\times \mathbb{R}^N \to \mathbb{C}$ is the wave function.
The power exponent $1+\frac{4s}{N}<p<1+\frac{4s}{N-2s}$ (when $N\leq  2s$, $1+\frac{4s}{N}<p<\infty$).

When $1+\frac{4s}{N}<p<1+\frac{4s}{N-2s}$  for $N>2s$, and $1+\frac{4s}{N}<p<\infty$ for $N\leq  2s$,
Eq. (\ref{eq1}) is the   $L^2$-supercritical  fractional NLS due to the scaling invariance.
Indeed,  if $u(t,x)$ is  a solution of Eq.(\ref{eq1}), then $u^{\lambda}(t,x)=\lambda^{\frac{2s}{p-1}}u(\lambda^{2s}t,\lambda x)$ is also a solution of Eq.(\ref{eq1}). Then, we see the following invariant norms.
 \begin{itemize}
\item [(1)] $\|u^{\lambda}\|_{L^{p_c}}= \|u\|_{L^{p_c}}$, where $p_c=\frac{N(p-1)}{2s}$. We remark that  $p_c>2$ when $
p-1>\frac{4s}N$, and then Eq. (\ref{eq1}) is called the $L^2$-supercritical NLS.
\item [(2)] $\dot{H}^{s_c}$-norm is invariant for Eq. (\ref{eq1}), i.e., $\|u^{\lambda}\|_{\dot{H}^{s_c}}= \|u\|_{\dot{H}^{s_c}}$,
where $s_c=\frac N2-\frac{2s}{p-1}$.
\end{itemize}

Recently, the Cauchy problem  (\ref{eq1})-(\ref{1.2})
has been widely studied in the recent years but is not completely settled yet, see, e.g.
\cite{cho-hwang-kwon-leedcds15} and \cite{hong-sirecpaa15}.
Let $N\geq 2$, $\frac12\leq s<1$ and $1+\frac{4s}{N}<p<1+\frac{4s}{N-2s}$. If $u_{0}\in H^s$, then
  the Cauchy problem (\ref{eq1})-(\ref{1.2}) has  a unique solution $u(t,x)$
on $I=[0,T)$ satisfying $u(t,x)\in C(I;H^{s})\bigcap
C^1(I;H^{-s})$. Moreover,  either $T=+\infty$ (global existence) or
both $0<T<+\infty$ and $\lim\limits_{t\to T} \|
u(t,x)\|_{H^{s}} =+\infty$ (blow-up). Furthermore,   $\forall\ t\in I$, $u(t,x)$ has two important  conservation
laws.
\begin{itemize}
\item [(i)] Conservation of energy:
\begin{equation}\label{E}E[u(t)]= \frac 12 \int _{\mathbb{R}^N}\overline{u}(-\triangle )^{s}udx -\frac{1}{p+1}\int_{\mathbb{R}^N}|u(x,t)|^{p+1}dx=E[u_0].\end{equation}
 \item [(ii)] Conservation of mass:
 \begin{equation}\label{M}M[u(t)]=\int_{\mathbb{R}^N} |u(t,x)|^2dx=M[u_0].\end{equation}
\end{itemize}
Guided by a analogy to classical NLS, the sufficient criteria for blowup of the solution can be found  in \cite{b-h-ljfa16} in terms of quantities
of the ground states $Q\in H^s(\mathbb R^N)$, by solving
 \be\label{eqQ}(-\triangle
)^s Q+Q-|Q|^{p-1}Q=0,\ \  \ \  Q\in H^s(\mathbb{R}^N)\ee
and   the Gagliardo-Nirenberg inequality (see Theorem 3.2 in \cite{Zhu20162})
\begin{equation}
\label{G-N}\int|v(x)|^{p+1}dx\leq C_{GN}
\left\|
v\right\|_2^{p+1-\frac{N(p-1)}{2s}}\left\|v\right\|_{\dot{H}^s}^{\frac{N(p-1)}{2s}}\end{equation}
with
\begin{equation}\label{CGN}
C_{GN}=\frac{2s(p+1)}{N(p-1)}\frac1{\left\|
Q\right\|_2^{p+1-\frac{N(p-1)}{2s}}\left\|Q\right\|_{\dot{H}^s}^{\frac{N(p-1)}{2s}-2}}.
\end{equation}

The blow-up and long-time dynamics of
the fractional NLS turn out to be very interesting problems.
To the best of the authors' knowledge, the cases that have been successfully
addressed by now are:
i) for the fractional NLS with nonlocal Hartree-type nonlinearites and radial data, see, e.g. \cite{ChoHwangHajaiejOzawa2012,Lenzmann2007}. Recently,  Guo and Zhu \cite{g-zarxiv1}
obtained a  sharp threshold of the scattering versus blow-up for the focusing $L^2$-supercritical case.
ii) for the power-type nonlinearities,  Boulenger, Himmelsbach, Lenzmann \cite{b-h-ljfa16} derived
a general blowup result for \eqref{eq1} in both $L^2$-supercritical and $L^2$-critical cases respectively,
subject to certain threshold. Recently, the authors in \cite{guo-sire-wang-zhaoarxiv1310.6816} performed Kenig-Merle type
argument \cite{km} to show the global well-posedness of radial solutions and scattering below sharp threshold of ground state solutions.
In \cite{sun-zhengarxiv}, the authors adapt the strategy in \cite{dodson-murphy} to prove a similar scattering result for the 3D radial focusing cubic
fractional NLS, under the restriction that $s\in(\frac34,1)$.

In this paper, we give a complement of the blowup result given by Boulenger, Himmelsbach, Lenzmann \cite{b-h-ljfa16} for general
dimensions and nonlinearities for $s\in(\frac N{2N-1},1)$, with different method from the 3D cubic case.
More precisely, we obtain the  scattering for the $L^2$-supercritical NLS Eq. (\ref{eq1}) in terms
of the arguments in \cite{qgcpde16,radial,km}, as follows.


\begin{theorem}\label{th1}
Let $N\geq 2$ and $1+\frac{4s}{N}<p<1+\frac{4s}{N-2s}$ Suppose that  $u_0\in H^s$ is radial and  $ M[u_{0}]^{\frac{s-s_c}{s_c}}E[u_{0}]<M[Q]^{\frac{s-s_c}{s_c}}E[Q],
$ where $Q$ is the ground-state solution of \eqref{eqQ}.
 If $\frac N{2N-1}\leq s<1$ and
  \begin{equation*}M[u_{0}]^{\frac{s-s_c}{s_c}}\|
u_{0}\|^2_{\dot H^s}<M[Q]^{\frac{s-s_c}{s_c}}\| Q\|^2_{\dot H^s},\end{equation*}
  then the corresponding solution $u(t)$ of
(\ref{eq1})-(\ref{1.2}) exists globally in $H^s$. Moreover, $u(t)$ scatters in $H^s$. Specifically, there
exists $\phi_\pm\in H^s$ such that
$\lim\limits_{t\rightarrow\pm\infty}\|u(t)-e^{-it(-\Delta)^{s}}\phi_\pm\|_{H^s}=0$.
\end{theorem}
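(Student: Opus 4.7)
The plan is to implement the Kenig-Merle concentration-compactness/rigidity scheme \cite{km}, adapted to the nonlocal fractional NLS in the spirit of \cite{qgcpde16,radial,b-h-ljfa16}. The first step is an energy-trapping / coercivity lemma. Substituting the Gagliardo-Nirenberg inequality \eqref{G-N}-\eqref{CGN} into \eqref{E}, one rewrites the scale-invariant quantity $M[u(t)]^{(s-s_c)/s_c}E[u(t)]$ as a function $f(y)$ of $y(t) := M[u(t)]^{(s-s_c)/s_c}\|u(t)\|_{\dot H^s}^2$, whose unique positive maximum is attained exactly at $y^*=M[Q]^{(s-s_c)/s_c}\|Q\|_{\dot H^s}^2$ with value $M[Q]^{(s-s_c)/s_c}E[Q]$. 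A continuity-in-$t$ argument combined with the conservation laws \eqref{E}-\eqref{M} and the two hypotheses then yields a uniform bound $y(t)\le(1-\delta)y^*$ and, as a byproduct, the coercivity $P[u(t)]\ge c\|u(t)\|_{\dot H^s}^2$ for the Pohozaev functional $P[u]:=\|u\|_{\dot H^s}^2-\frac{N(p-1)}{2s(p+1)}\|u\|_{p+1}^{p+1}$. Global existence in $H^s$ is then immediate.

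Next I would set up the small-data scattering theory in $H^s$ via the Strichartz estimates for the fractional Schr\"odinger group of \cite{hong-sirecpaa15}: there exists $\eta_0>0$ such that whenever $\|e^{-it(-\Delta)^s}u_0\|_{S(H^s)}\le\eta_0$ for a suitable scattering norm $S(H^s)$, the solution exists globally and scatters. Coupled with this, I would establish a linear profile decomposition for bounded radial sequences in $H^s$ relative to the symmetries of $e^{-it(-\Delta)^s}$ (in the radial class, only time translations and the scaling $u\mapsto\lambda^{2s/(p-1)}u(\lambda\cdot)$ matter), together with the corresponding nonlinear perturbation lemma.

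Suppose the theorem fails. Then a standard minimality argument produces a critical threshold $\delta_c$ strictly below $M[Q]^{(s-s_c)/s_c}E[Q]$ and a radial critical datum $u_{0,c}$ whose solution $u_c$ does not scatter; applying the profile decomposition and the perturbation lemma then shows that the trajectory $\{u_c(t)\}$ is precompact in $H^s$ up to scaling, i.e.\ there exists $\lambda(t)>0$ so that $\{\lambda(t)^{-2s/(p-1)}u_c(t,\lambda(t)^{-1}\cdot)\}$ is precompact, with $\lambda(t)$ under control.

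The main obstacle is the rigidity step, which must exclude this critical element. Here I would use the localized radial virial identity: for a radial cut-off $\phi_R$ equal to $|x|^2/2$ on $|x|\le R$ and constant beyond $2R$, set $M_R(t):=2\,\mathrm{Im}\!\int\nabla\phi_R\cdot\nabla u\,\bar u\,dx$. Differentiating twice in time, the leading term is $8P[u_c(t)]\ge 8c\|u_c(t)\|_{\dot H^s}^2$ by the coercivity above, while the remainder involves the nonlocal commutator $[(-\Delta)^s,\phi_R]$ and error terms from the nonlinearity on $|x|\ge R$. Using the commutator bounds of \cite{b-h-ljfa16} in combination with the radial Sobolev embedding $|u(x)|\lesssim|x|^{-(N-2s)/2}\|u\|_{H^s}$, one shows that these remainders decay like a positive power of $R^{-1}$ precisely when $s\ge N/(2N-1)$, which is exactly the source of the restriction on $s$ in the theorem. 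Choosing $R$ sufficiently large and integrating $M_R'(t)$ over a long time interval then contradicts $|M_R(t)|\lesssim R$ unless $\|u_c(t)\|_{\dot H^s}\equiv 0$, forcing $u_c\equiv 0$ and closing the argument. The delicate bookkeeping of the nonlocal commutator and nonlinear error terms against the coercivity gain is where I expect essentially all the technical work to lie.
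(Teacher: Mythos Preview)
Your overall Kenig--Merle strategy matches the paper's, but you misidentify where the restriction $s\ge N/(2N-1)$ enters. In the paper this threshold arises from the \emph{radial Strichartz estimates} of \cite{G-W} (Proposition~\ref{prostrichartz} and the Remark following Definition~\ref{defnorm}): one needs the $0$-level admissible set $\Lambda_0$ to be nonempty so that the small-data theory (Proposition~\ref{sd}), the scattering criterion (Proposition~\ref{h1scattering}) and the long-time perturbation lemma (Proposition~\ref{properturb}) can be run without derivative loss, and this holds precisely when $s\ge N/(2N-1)$. It does \emph{not} come from the rigidity step. Conversely, in the rigidity argument the paper does not use radial Sobolev decay to control the error terms on $|x|\ge R$; it exploits the precompactness of the critical trajectory directly (Corollaries~\ref{compact-localization} and~\ref{corlocal}), which gives uniform-in-$t$ smallness of $\int_0^\infty m^s\!\int_{|x|>R}|\nabla u_m|^2\,dx\,dm+\int_{|x|>R}(|u|^2+|u|^{p+1})\,dx$ for $R$ large, with no further constraint on $s$ beyond $s\ge\frac12$ needed for the virial machinery of \cite{b-h-ljfa16}. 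So your sentence ``these remainders decay like a positive power of $R^{-1}$ precisely when $s\ge N/(2N-1)$, which is exactly the source of the restriction on $s$'' is wrong on both counts.

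A second, smaller point: the profile decomposition (Lemma~\ref{lpd}) and the precompactness statement (Proposition~\ref{procompact}) carry \emph{no} scaling parameter $\lambda(t)$. Because the problem is strictly between mass- and energy-critical, one normalizes $\|u_n\|_2=1$ and the $H^s$ bound from energy trapping rules out concentration; the orbit $\{u_c(\cdot,t):t\ge 0\}$ is precompact in $H^s$ as it stands. Introducing a scaling modulation as you do would force you to prove an additional no-escape-to-$0$ or $\infty$ statement for $\lambda(t)$, which is entirely avoidable here.
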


We should point out that the sharp criteria of scattering  for the nonlinear Schr\"{o}dinger equation is a quite important and interesting problems, and many researchers have devoted on this topics (see e.g. \cite{Cazenave2003,dodson-murphy,qgcpde16,g-zarxiv1,radial,km,sun-zhengarxiv}). The scattering  involves in the Strichartz estimates and the choice of admissible pairs, which is quite different and difficult with respect to different nonlinearities.   Although in \cite{g-zarxiv1}, we have proved the scattering for the fractional Hartree equation in the $L^2$ supercritical case, that   for the  fractional NLS (\ref{eq1}) with  power-type  nonlinearity is a nontrivial extension(e.g. Proposition \ref{h1scattering}, Theorem \ref{rigidity}).

At the end of this section, we  introduce  some notations.
$L^q:=L^q(\mathbb{R}^N)$,
$\|\cdot\|_q:=\|\cdot\|_{L^q(\mathbb{R}^N)}$,
the time-space mixed norm $$\|u\|_{L^qX}:=\left(\int_\mathbb R\|u(t,\cdot)\|_{X}^qdt\right)^{\frac1q},$$
$H^s:=H^s(\mathbb{R}^N)$, $\dot{H}^s:=\dot{H}^s(\mathbb{R}^N)$, and
$\int \cdot dx:=\int_{\mathbb{R}^N}\cdot dx$.  $\mathcal{F}v=\widehat{v}$ denotes  the Fourier
transform of $v$, which for $v\in L^1( \mathbb{R}^N)$ is given by
$\mathcal{F}v= \widehat{v} (\xi):= \int e^{-i
x\cdot\xi}v(x)dx$  for all $\xi\in \mathbb{R}^N$, and $\mathcal{F}^{-1}v$ is
 the inverse Fourier transform  of
$v(\xi)$. $\Re z$ and $\Im z$ are the
real and imaginary parts of the complex number $z$, respectively.
 $\overline{z}$ denotes the complex conjugate of the complex number $z$. The various positive
constants will be denoted by $C$ or $c$.

\newpage

\section{Local theory and Strichartz estimate }

In fact, the Cauchy
problem \eqref{eq1}-\eqref{1.2} has the following integral equation:
$$u(t) = U(t)u_0+i\int_0^tU(t-t^1)|u|^{p-1}u(t^1)dt^1
$$
where$$
U(t)\phi(x) = e^{-i(-\triangle)^st}\phi(x)=\frac{1}{(2\pi)^{\frac N2}}\int  e^{i(x\cdot\xi-|\xi|^{2s})}\widehat{\phi}(\xi)d\xi.$$
First, we  recall the local theory for Eq.~\eqref{eq1}~ by the radial Strichartz estimate
~(\cite{G-W}).

\begin{definition}\label{defadmissible}
For the given $\theta\in[0,s)$, we state that the pair $(q,r)$ is $\theta$-level admissible,
denoted by $(q,r)\in\Lambda_\theta$,  if
\begin{align}\label{gap}
q,r\geq2,\ \
\frac{2s}{q}+\frac N{r}=\frac N2-\theta
\end{align}
and
 \begin{align}\label{range}
 \frac{4N+2}{2N-1}\leq q\leq\infty,\ \
\frac1{q}\leq\frac {2N-1}2(\frac12-\frac1{r}),\ \ \
or\ \  2\leq q<\frac{4N+2}{2N-1},
  \ \frac1{q}<\frac {2N-1}2(\frac12-\frac1{r}).
 \end{align}
Correspondingly, we denote the dual $\theta$-level admissible pair by $(q',r')\in\Lambda'_{\theta}$~if
~$(q,r)\in\Lambda_{-\theta}$~with~$(q',r')$~is the H\"older~ dual to~$(q,r).$~
\end{definition}
\begin{proposition}\label{prostrichartz} (see\cite{G-W})
Assume that $N\geq2$ and that $u_0,f$ are radial; then  for $q_j,r_j\geq2,j=1,2$,
\be
\|U(t)\phi\|_{L^{q_1}L^{r_1}}\leq C\|D^\theta\phi\|_2,
\ee
where $D^\theta=(-\triangle)^{\frac\theta2}$,
\be\label{inhomo}
\|\int_0^tU(t-t^1)f(t^1)dt^1\|_{L^{q_1}L^{r_1}}\leq C\|f\|_{L^{q'_2}L^{r'_2}},
\ee
in which $\theta\in\mathbb R$, the pairs $(q_j,r_j)$ satisfy the range conditions \eqref{range}
and the gap condition
$$\frac{2s}{q_1}+\frac N{r_1}=\frac N2-\theta,\ \ \ \frac{2s}{q_2}+\frac N{r_2}=\frac N2+\theta.$$
\end{proposition}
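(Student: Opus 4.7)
The plan is to follow the Guo--Wang approach, combining a frequency-localized radial dispersive estimate with the Keel--Tao abstract $TT^{\ast}$ machinery. Since $U(t)=e^{-it(-\triangle)^s}$ is unitary on $L^2$, the $L^2\to L^2$ bound is automatic, so the substance lies in upgrading the $L^1\to L^\infty$ decay by exploiting the oscillatory cancellation that is available only for radial data. The upgraded dispersive bound, interpolated with $L^2$-conservation through $TT^{\ast}$ and summed over Littlewood--Paley pieces, will deliver the full homogeneous estimate; the Christ--Kiselev lemma will then convert the resulting untruncated inhomogeneous estimate to the retarded form \eqref{inhomo}.

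First, I would Littlewood--Paley localize and work frequency by frequency. For radial $P_k f$, the kernel of $P_k U(t)$ takes the Hankel form
\[
K_{t,k}(|x|)=c_N\int_0^\infty \chi(2^{-k}\rho)\,e^{-it\rho^{2s}}\,\rho^{N-1}(|x|\rho)^{-(N-2)/2}J_{(N-2)/2}(|x|\rho)\,d\rho.
\]
Using the asymptotic expansion $J_\nu(r)\sim r^{-1/2}\cos(r-\nu\pi/2-\pi/4)$ for large $r$, together with stationary phase at the critical point $2s\rho^{2s-1}t=|x|$ and a van der Corput estimate for the phase $|x|\rho-t\rho^{2s}$, one arrives at the radial dispersive bound
\[
|K_{t,k}(|x|)|\leq C\, 2^{kN}(1+2^{2sk}|t|)^{-(2N-1)/2},
\]
where the enhanced exponent $(2N-1)/2$ replaces the $N/2$ rate that holds in the non-radial case. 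This exponent is precisely what produces the enlarged admissible range \eqref{range}. Interpolating this pointwise decay with the trivial $L^2\to L^2$ estimate produces $L^{r'}\to L^r$ bounds for $P_k U(t)$ across the required range of $r$.

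Feeding this family of dispersive estimates into the Keel--Tao $TT^{\ast}$ lemma applied to $T_k f:=P_k U(t)f$ yields the frequency-localized homogeneous estimate $\|U(t)P_k\phi\|_{L^{q_1}L^{r_1}}\leq C\, 2^{\theta k}\|P_k\phi\|_2$ for every $\theta$-admissible pair in $\Lambda_\theta$; the Keel--Tao bilinear endpoint covers the boundary case $q_1=(4N+2)/(2N-1)$, while off-endpoint pairs fall out of Hardy--Littlewood--Sobolev plus interpolation (this is the reason for the strict inequality in the second regime of \eqref{range}). A square-function sum in $k$, valid because $r_1\geq 2$, promotes these to the stated bound $\|U(t)\phi\|_{L^{q_1}L^{r_1}}\leq C\|D^\theta\phi\|_2$. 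The same $TT^{\ast}$ calculation yields the untruncated version of \eqref{inhomo}, and the Christ--Kiselev lemma (applicable because $q_2'<q_1$ strictly on $\Lambda_\theta\times\Lambda'_\theta$) then upgrades it to the retarded form $\int_0^t$. The chief obstacle is the radial dispersive estimate itself: the symbol $|\xi|^{2s}$ fails to be smooth at the origin, so a single stationary-phase pass will not close; one must split into low- and high-frequency regimes, apply a fractional van der Corput adapted to the weight $\rho^{2s-1}$ on each piece, and balance the Bessel oscillation of frequency $|x|\rho$ against the phase oscillation $t\rho^{2s}$ in order to extract the full radial gain $(2N-1)/2$ uniformly in $k$.
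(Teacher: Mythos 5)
The paper does not actually prove Proposition \ref{prostrichartz}; it is quoted verbatim from Guo--Wang \cite{G-W}, so there is no internal proof to compare against. Judged on its own merits, your outline assembles the right ingredients (Littlewood--Paley localization, the Hankel representation of radial functions, Bessel asymptotics, $TT^{\ast}$, Christ--Kiselev), but the pivotal step is false. You claim the frequency-localized kernel obeys a pointwise dispersive bound with decay rate $(2N-1)/2$, reverse-engineered so that Keel--Tao reproduces the range \eqref{range}. No such pointwise improvement exists for radial data. The Bessel asymptotics give the kernel a factor $(2^{k}|x|)^{-(N-1)/2}$ and one-dimensional stationary phase gives a factor $(2^{2sk}|t|)^{-1/2}$, and at the stationary configuration $2^{k}|x|\sim 2^{2sk}|t|$ these combine to $2^{kN}(2^{2sk}|t|)^{-N/2}$ --- exactly the non-radial rate $N/2$. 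A concrete check: for $s=1$, $N=3$ the free Schr\"odinger kernel is $(4\pi i t)^{-3/2}e^{i|x|^2/4t}$, whose modulus is $|t|^{-3/2}$ everywhere (radial or not), never $|t|^{-5/2}$. Since Keel--Tao requires a genuine $|t|^{-\sigma}$ bound on $\|U(t)U(\tau)^{\ast}\|_{L^1\to L^\infty}$, feeding it the fictitious rate $\sigma=(2N-1)/2$ does not produce the enlarged admissible range, and the whole derivation of \eqref{range} collapses.

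The actual source of the radial gain in \cite{G-W} is not an improved dispersive estimate but the \emph{spatial} decay $(2^{k}|x|)^{-(N-1)/2}$ itself: after substituting the Bessel expansion one reduces to one-dimensional oscillatory integral operators with the weight $r^{(N-1)(\frac{1}{2}-\frac{1}{r_1})}$ absorbed into the radial measure $r^{N-1}dr$, and the enlarged range $\frac{1}{q}\leq\frac{2N-1}{2}(\frac{1}{2}-\frac{1}{r})$ emerges from weighted one-dimensional Strichartz/local-smoothing estimates together with careful treatment of the near-origin region $2^{k}|x|\lesssim 1$ (where the Bessel asymptotics fail and one uses $|J_\nu(r)|\lesssim r^{\nu}$ instead). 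Your remark that the non-smoothness of $|\xi|^{2s}$ at the origin is the chief obstacle also misidentifies the difficulty: Littlewood--Paley localization plus scaling reduces each piece to the unit annulus where the symbol is smooth, so that issue is routine. To repair the argument you would need to abandon the uniform pointwise dispersive bound and instead carry the $|x|$-dependent decay through a weighted $TT^{\ast}$ or a direct one-dimensional reduction, which is essentially the content of the Guo--Wang paper.
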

\begin{definition}\label{defnorm}

We  define the following Srichartz norm
$$\|u\|_{S(\Lambda_{s_c})}=\sup_{(q,r)\in\Lambda_{s_c}}\|u\|_{L^qL^r}
 $$
and the dual Strichartz norm
$$\|u\|_{S'(\Lambda_{-s_c})}=\inf_{(q',r')\in\Lambda'_{s_c}}\|u\|_{L^{q'}L^{r'}}=\inf_{(q,r)\in\Lambda_{-s_c}}\|u\|_{L^{q'}L^{r'}},$$
where~$(q',r')$~is the H\"older~ dual to~$(q,r).$~
\end{definition}

\begin{remark}
Notice that if $$s\in[\frac N{2N-1},1)\subset(\frac12,1),$$
the gap condition \eqref{gap} with $\theta=0$
right implies the range condition \eqref{range}, which further means that $\Lambda_0$ is nonempty.
That is we have a full set of 0-level admissible Strichartz estimates without loss of derivatives in radial case.
By taking
\begin{align}\label{qc}q_c=r_c=\frac{(p-1)(N+2s)}{2s},\end{align}
 we see that $(q_c,r_c)\in \Lambda_{s_c}\neq\emptyset$
 is an $s_c$-level admissible pair.

\end{remark}

When $\phi,f$ are radial, from Proposition \ref{prostrichartz},  we have the following Strichartz estimates.
$$\|U(t)\phi\|_{S(\Lambda_0)}\leq C\|\phi\|_2$$ and
$$\left\|\int_0^tU(t-t^1)f(\cdot,t^1)dt^1\right\|_{S(\Lambda_0)}\leq C\|f\|_{S'(\Lambda_0)}.$$
Then,  we   further obtain
$$\|U(t)\phi\|_{S(\Lambda_{s_c})}\leq c\|\phi\|_{\dot{H}^{s_c}}, \ \ \left\|\int_0^tU(t-t^1)f(\cdot,t^1)dt^1\right\|_{S(\Lambda_{s_c})}\leq C\|D^{s_c}f\|_{S'(\Lambda_{0})}$$
and
$$\left\|\int_0^tU(t-t^1)f(\cdot,t^1)dt^1\right\|_{S(\Lambda_{s_c})}\leq C\|f\|_{S'(\Lambda_{-s_c})},$$ where we use
the Sobolev embedding.

Next, we denote~$S(\Lambda_{\theta};I)$~to indicate  its restriction to a time subinterval~$I\subset(-\infty,+\infty).$

\begin{proposition}\label{sd}
(Small data) Let $N\geq 2$ and $1+\frac{4s}{N}<p<1+\frac{4s}{N-2s}$.
If $\|u_{0}\|_{\dot H^{s_c}}\leq A$ is radial, then,
there exists  $\delta_{sd}=\delta_{sd}(A)>0$ such that when
~$\|U(t)u_{0}\|_{S(\Lambda_{s_c})}\leq \delta_{sd}$,  the corresponding solution $u=u(t) $~solving \eqref{eq1} is global, and
\[
 \|u\|_{S(\Lambda_{s_c})}\leq
2\|U(t)u_{0}\|_{S(\Lambda_{s_c})},\qquad \|D^{s_c}u\|_{S(\Lambda_0)}\leq
2c\|u_{0}\|_{\dot H^{s_c}}.\]
(Note that by
 the Strichartz estimates, the hypotheses are satisfied if
~$\|u_{0}\|_{\dot H^{s_c}}\leq c\delta_{sd}. $)
\end{proposition}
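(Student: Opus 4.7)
The plan is to run a standard Banach contraction mapping argument on the Duhamel integral equation
\[\Phi(u)(t)=U(t)u_{0}+i\int_{0}^{t}U(t-t^{1})|u|^{p-1}u(t^{1})\,dt^{1}\]
in a complete metric space tailored to the two norms in the conclusion. Specifically, I would work with
\[X=\left\{u:\ \|u\|_{S(\Lambda_{s_c})}\leq 2\|U(t)u_{0}\|_{S(\Lambda_{s_c})},\ \ \|D^{s_c}u\|_{S(\Lambda_0)}\leq 2c\|u_{0}\|_{\dot H^{s_c}}\right\},\]
equipped with the distance $d(u,v)=\|u-v\|_{S(\Lambda_{s_c})}$, and verify that $\Phi$ sends $X$ into itself and is a contraction once $\delta_{sd}=\delta_{sd}(A)$ is small enough.

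For the $S(\Lambda_{s_c})$ bound, I would majorize $\|u\|_{S(\Lambda_{s_c})}$ by the concrete norm $\|u\|_{L^{q_c}L^{r_c}}$ coming from the admissible pair $(q_c,r_c)$ in \eqref{qc} and then apply H\"older in space-time to get
\[\||u|^{p-1}u\|_{L^{q'}L^{r'}}\leq \|u\|_{L^{q_c}L^{r_c}}^{p},\]
where $(q',r')$ is the H\"older dual to some $(q,r)\in\Lambda_{-s_c}$ determined by $1/q'=p/q_c$, $1/r'=p/r_c$; a direct computation using $s_c=\frac{N}{2}-\frac{2s}{p-1}$ confirms the gap condition $\frac{2s}{q}+\frac{N}{r}=\frac{N}{2}+s_c$, and the range conditions \eqref{range} are verified in the regime $s\in[\frac{N}{2N-1},1)$, $1+\frac{4s}{N}<p<1+\frac{4s}{N-2s}$. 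The dual Strichartz inequality \eqref{inhomo} then yields
\[\|\Phi(u)\|_{S(\Lambda_{s_c})}\leq \|U(t)u_{0}\|_{S(\Lambda_{s_c})}+C\|u\|_{S(\Lambda_{s_c})}^{p}.\]
For the $\|D^{s_c}\Phi(u)\|_{S(\Lambda_0)}$ bound, I would commute $D^{s_c}$ past the Duhamel integral, invoke the Christ--Weinstein fractional chain rule to place $D^{s_c}$ on a single factor of $u$, and balance the remaining $p-1$ copies by H\"older; this produces, for suitable $(\tilde q,\tilde r)\in\Lambda_{-s_c}$ and $(a,b)\in\Lambda_0$ (both nonempty thanks to the range hypothesis on $s$),
\[\|D^{s_c}(|u|^{p-1}u)\|_{L^{\tilde q'}L^{\tilde r'}}\leq C\|u\|_{L^{q_c}L^{r_c}}^{p-1}\|D^{s_c}u\|_{L^{a}L^{b}},\]
and hence, by \eqref{inhomo} again,
\[\|D^{s_c}\Phi(u)\|_{S(\Lambda_0)}\leq c\|u_{0}\|_{\dot H^{s_c}}+C\|u\|_{S(\Lambda_{s_c})}^{p-1}\|D^{s_c}u\|_{S(\Lambda_0)}.\]

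With these two bounds, plus their analogues for $\Phi(u)-\Phi(v)$ obtained from the pointwise estimate $\bigl||u|^{p-1}u-|v|^{p-1}v\bigr|\leq C(|u|^{p-1}+|v|^{p-1})|u-v|$ combined with the same H\"older/chain-rule scheme, both the self-mapping and contraction conditions on $X$ collapse to a single requirement $C(2\delta_{sd})^{p-1}\leq \frac{1}{2}$, which is achievable by choosing $\delta_{sd}$ sufficiently small. Banach's fixed point theorem then produces a unique $u\in X$ solving the integral equation globally in time with exactly the claimed bounds. The main technical obstacle is verifying that the H\"older exponents forced by the two nonlinear estimates can actually be realized as Strichartz-admissible pairs satisfying both the gap identity and the somewhat restrictive range condition \eqref{range}; this is precisely where the assumption $s\in[\frac{N}{2N-1},1)$ (ensuring $\Lambda_0\neq\emptyset$, as noted in the Remark) is essential, and where the fractional chain rule must be applied with care so that every intermediate norm stays on an admissible pair.
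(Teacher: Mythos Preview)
Your proposal is correct and follows essentially the same route as the paper: a contraction argument on the Duhamel map in the ball $B=X$, using the inhomogeneous Strichartz estimate together with H\"older for the $S(\Lambda_{s_c})$ bound and the fractional Leibniz/chain rule for the $\|D^{s_c}\cdot\|_{S(\Lambda_0)}$ bound, then closing with a smallness condition of the form $C(2\delta_{sd})^{p-1}\le\frac12$. One small slip: in your $D^{s_c}$ estimate the dual pair should come from $\Lambda_0$ (so that $(\tilde q',\tilde r')\in\Lambda_0'$), not from $\Lambda_{-s_c}$; the paper likewise places $D^{s_c}(|v|^{p-1}v)$ in $L^{q'}L^{r'}$ with $(q,r)\in\Lambda_0$.
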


\begin{proof}
Denote
$$\Phi_{u_0}(v)=U(t)u_{0}+i\int_0^tU(t-t^1)|v|^{p-1}v(t^1)dt^1.$$
It follows from  the Strichartz estimates  that
$$\|D^{s_c}\Phi_{u_0}(v)\|_{S(\Lambda_0)}\leq c\|u_{0}\|_{\dot{H}^{s_c}}
+c\|D^{s_c}[|v|^{p-1}v]\|_{L^{q^{'}}L^{r^{'}}}$$
and
$$\|\Phi_{u_0}(v)\|_{S(\Lambda_{s_c})}\leq \|U(t)u_{0}\|_{S(\Lambda_{s_c})}
+\||v|^{p-1}v\|_{L^{\frac {q_2}p}L^{\frac {r_2}p}}\leq
 \|U(t)u_{0}\|_{S(\Lambda_{s_c})}
+\|v\|^{p}_{L^{q_2}L^{r_2}},$$ where ~$(q',r')\in\Lambda'_{0},$ $(q_2,r_2)\in\Lambda_{s_c}$ and  $(\frac {q_2}p,\frac {r_2}p)\in\Lambda'_{s_c}$.
Then, by applying the fractional Leibnitz~\cite{ChoHwangHajaiejOzawa2012,kato1995}~,   we deduce that
\begin{align*}
\|D^{s_c}[|v|^{p-1}v]\|_{L^{q^{'}}L^{r^{'}}}
& \leq c\||u|^{p-1}\|_{L^{\frac{q_1q'}{q_1-q'}}L^{\frac{r_1r'}{r_1-r'}}}\|D^{s_c}v\|_{L^{q_{1}}L^{r_{1}}}\\
&\leq c\|v\|^{p-1}_{L^{q_{2}}L^{r_{2}}}\|D^{s_c}v\|_{L^{q_{1}}L^{r_{1}}},
\end{align*}
where the pairs $(q,r),(q_1,r_1)\in\Lambda_{0},$
$(q_2,r_2)\in\Lambda_{s_c}$.
Now, we take  $$\delta_{sd}\leq\left(\min\left(\frac{1}{2^pc},\frac{1}{2^p}\right)\right)^{\frac1{p-1}},$$
and define $$B:=\left\{v|\|v\|_{S(\Lambda_{s_c})}\leq 2\|U(t)u_{0}\|_{S(\Lambda_{s_c})},
\|D^{s_c}v\|_{S(\Lambda_0)}\leq
2c\|u_{0}\|_{\dot{H}^{s_c}}
\right\}.$$
Then, we can prove that ~$\Phi_{u_0}$ is a contraction mapping from $B$ to $B$, which completes the proof.

\end{proof}

\begin{proposition}\label{h1scattering}
(Scattering criterion) Let $N\geq 2$ and $1+\frac{4s}{N}<p<1+\frac{4s}{N-2s}$.
If $u_{0}\in H^s$ is radial and $u(t)$ is global with  both bounded $s_c$-level Strichartz norm $\|u\|_{S(\Lambda_{s_c})}<\infty$ and
uniformly bounded $H^s$ norm
$\sup\limits_{t\in[0,\infty)}\|u\|_{H^s}\leq B,$
then $u(t)$ scatters in $H^s$ as $t\rightarrow +\infty.$
More precisely, there exists $\phi^{+}\in H^s$ such that
$$\lim_{t\rightarrow+\infty}\|u(t)-U(t)\phi^{+}\|_{H^s}=0.$$
\end{proposition}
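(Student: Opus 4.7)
The plan is to construct the scattering state $\phi^+$ directly from the Duhamel formula and show convergence in $H^s$. I would set
\[
\phi^+ := u_0 + i\int_0^{\infty} U(-t')|u|^{p-1}u(t')\,dt'.
\]
Since $U(t)$ is an $H^s$-isometry, showing $\|u(t)-U(t)\phi^+\|_{H^s}\to 0$ is equivalent to showing that the Cauchy tail
\[
\left\|\int_{t_1}^{t_2} U(t_1-t')|u|^{p-1}u(t')\,dt'\right\|_{H^s}
\]
tends to zero as $t_1\to\infty$, uniformly in $t_2\geq t_1$. Combined with the corresponding statement for $t_2\to\infty$ and completeness of $H^s$, this yields both the existence of $\phi^+$ and the desired limit.

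I would bound the above tail via the inhomogeneous Strichartz estimate of Proposition \ref{prostrichartz}, applied at the $L^2$-level to $|u|^{p-1}u$ and to $D^s(|u|^{p-1}u)$, using that $(\infty,2)\in\Lambda_0$ so that $L^\infty L^2\hookrightarrow S(\Lambda_0)$. The tail is then controlled by
\[
C\bigl(\|\,|u|^{p-1}u\,\|_{S'(\Lambda_0;[t_1,t_2])} + \|D^s(|u|^{p-1}u)\|_{S'(\Lambda_0;[t_1,t_2])}\bigr).
\]
Mirroring the exponent choice in Proposition \ref{sd}, I would pick admissible pairs $(q,r),(q_1,r_1)\in\Lambda_0$ and $(q_2,r_2)\in\Lambda_{s_c}$ satisfying the H\"older identity
\[
\frac{1}{q'} = \frac{p-1}{q_2}+\frac{1}{q_1},\qquad \frac{1}{r'}=\frac{p-1}{r_2}+\frac{1}{r_1},
\]
and then apply the fractional Leibnitz rule (Kato--Ponce, as invoked in the proof of Proposition \ref{sd}) to obtain
\[
\|D^s(|u|^{p-1}u)\|_{L^{q'}L^{r'}} \leq C\|u\|^{p-1}_{S(\Lambda_{s_c};[t_1,t_2])}\|D^s u\|_{S(\Lambda_0;[t_1,t_2])},
\]
with an analogous (and simpler) bound for the undifferentiated term involving $\|u\|_{S(\Lambda_0;[t_1,t_2])}$ in place of $\|D^s u\|_{S(\Lambda_0;[t_1,t_2])}$.

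To finish, I would exploit $\|u\|_{S(\Lambda_{s_c};[0,\infty))}<\infty$ to partition $[0,\infty)=\bigcup_{j=1}^{J} I_j$ into finitely many subintervals on which $\|u\|_{S(\Lambda_{s_c};I_j)}\leq\eta$ for a small $\eta$ chosen a priori. Applying the same Strichartz/Leibnitz bounds on each $I_j$ to the Duhamel representation
\[
u(t)=U(t-t_{j-1})u(t_{j-1})+i\int_{t_{j-1}}^t U(t-t')|u|^{p-1}u(t')\,dt',
\]
and using $\|u(t_{j-1})\|_{H^s}\leq B$, the small factor $\eta^{p-1}$ absorbs on the left to yield
\[
\|D^s u\|_{S(\Lambda_0;I_j)} + \|u\|_{S(\Lambda_0;I_j)} \leq C(B).
\]
Iterating over the finite partition gives a global bound on $\|D^s u\|_{S(\Lambda_0;[0,\infty))}$ and $\|u\|_{S(\Lambda_0;[0,\infty))}$. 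Returning to the Cauchy estimate, the factor $\|u\|^{p-1}_{S(\Lambda_{s_c};[t_1,t_2])}$ tends to $0$ as $t_1\to\infty$ by absolute continuity of the $S(\Lambda_{s_c})$ norm, while the remaining factors stay bounded; the tail vanishes and scattering follows.

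The main obstacle I anticipate is the exponent bookkeeping inside the narrower radial admissibility range \eqref{range}: one must produce compatible triples $(q,r),(q_1,r_1)\in\Lambda_0$ and $(q_2,r_2)\in\Lambda_{s_c}$ satisfying both the H\"older identity and the strict range constraint, which is precisely what makes the hypothesis $s\geq N/(2N-1)$ indispensable. A secondary technical point is that controlling the full $H^s$ norm (rather than only $\dot H^s$) forces the simultaneous use of $L^2$-level and $D^s$-level Strichartz estimates, whose dual-exponent choices must be checked to be compatible in both halves of the argument.
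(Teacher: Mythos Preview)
Your proposal is correct and follows essentially the same route as the paper: define $\phi^+$ via Duhamel, bound the nonlinearity in a dual $\Lambda_0$ norm by a fractional Leibniz estimate that factors off $\|u\|^{p-1}_{S(\Lambda_{s_c})}$, partition $[0,\infty)$ into finitely many subintervals with small $s_c$-Strichartz norm, and bootstrap on each piece to obtain $\|D^\alpha u\|_{S(\Lambda_0)}<\infty$ for $\alpha\in\{0,s\}$ before letting the tail vanish. The only cosmetic difference is that the paper fixes the specific diagonal pair $(q_c,r_c)=\bigl(\tfrac{(p-1)(N+2s)}{2s},\tfrac{(p-1)(N+2s)}{2s}\bigr)\in\Lambda_{s_c}$ throughout, which dispenses with the exponent bookkeeping you flag as a potential obstacle.
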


\begin{proof}
It follows from the integral equation
\begin{align}\label{203}
u(t)=U(t)u_{0}+i\int_0^tU(t-t^1)|u|^{p-1}u(t^1)dt^1
\end{align}
that
\begin{align}\label{2.03}
u(t)-U(t)\phi^{+}=-i\int_t^{\infty}U(t-t^1)|u|^{p-1}u(t^1)dt^1,
\end{align}
where $$\phi^{+}=u_{0}+i\int_0^{\infty}U(-t^1)|u|^{p-1}u(t^1)dt^1.$$

Applying Proposition \ref{prostrichartz}, we deduce that
 for $0\leq\alpha\leq s$,
there exist some $(q,r)\in\Lambda_0$, $(q_1,r_1)\in\Lambda'_{0}$ such that
\begin{align}\label{204}
 \left\|D^\alpha\left(\int_{I}U(t-s)\left(|u|^{p-1}u(s,x)\right)ds\right)\right\|_{L^{q}_{I}L^{r}}
&\leq C\|D^\alpha\left(|u|^{p-1}u\right)\|_{L^{q_1}_{I}L^{r_1}}\\ \nonumber
&\leq C\|D^\alpha u\|_{L^q_{I} L^r}\|u\|_{L^{q_c}_{I}L^{r_c}}^{p-1},
\end{align}
where $I\subset[0,+\infty)$, $$\frac1{q_1}=\frac{p-1}{q_c}+\frac1q,\ \ \frac1{r_1}=\frac1r+
\frac{p-1}{r_c}.$$
Due to  $\|u\|_{L^{q_c}_{[0,\infty)}L^{r_c}}<\infty$, we  divide  $[0,+\infty)$ into $N$ subintervals: $I_j=[t_j,t_{j+1}],1\leq j\leq N$, such that
  $\|u\|_{L^{q_c}_{I_j}L^{r_c}}<\delta$ (for small $\delta$ ) on each subinterval $I_j$. Thus, from \eqref{203} and \eqref{204}, we see that
for $0\leq\alpha\leq s, \forall 1\leq j\leq N$,
\begin{align*}
\|D^\alpha u\|_{L^{q}_{I_j}L^{r}}
&\leq
\|U(t)u(t_j)\|_{L^{q}_{I_j}L^{r}}
+\left
\|D^\alpha\left(\int_{I_j}U(t-s)\left(|u|^{p-1}u(s,x)\right)ds\right)\right\|_{L^{q}_{I_j}L^{r}}\\
&\leq \|U(t)u(t_j)\|_{L^{q}_{I_j}L^{r}}+ C\|D^\alpha u\|_{L^q_{I_j} L^r}\|u\|_{L^{q_c}_{I_j}L^{r_c}}^{p-1}\\
&\leq
CB+C\delta^{p-1}\|D^\alpha u\|_{L^q_{I_j} L^r}.
\end{align*}
Let $\delta$ be small and  satisfy $C\delta^{p-1}<\frac12$. Then
$\|D^\alpha u\|_{L^{q}_{I_j}L^{r}}<\infty,\ 1\leq j\leq N$, and $$\|D^\alpha u\|_{L^{q}L^{r}}<\infty.$$Moreover, from  \eqref{2.03}, we see that  for  $0\leq\alpha\leq s$,
\begin{equation}\label{0001}
\|D^\alpha (u(t)-U(t)\phi^{+})\|_{2}\leq \|u\|_{L^{q_c}_{[t,\infty)}L^{r_c}}^{p-1}\|D^\alpha u\|_{L^{q}_{[t,\infty)}L^{r}}.
\end{equation}
Therefore, we can obtain  the claim by taking $\alpha=0$ and $\alpha=s$ in (\ref{0001})  and
letting  $t\rightarrow+\infty$.
\end{proof}

\begin{proposition}\label{properturb}
 For any given $A$, there exist $\epsilon_0=\epsilon_0(A,N,p)$ and
$c=c(A)$ such that for any $\epsilon\leq\epsilon_0$, any interval $I
=(T_1,T_2)\subset \mathbb R$ and any
 $\tilde{u}=\tilde{u}(x,t)\in H^s$ satisfying
\begin{equation}\label{0002} i\tilde{u}_{t}-(-\Delta)^s \tilde{u}-|\tilde{u}|^{p-1}\tilde{u}=e.\end{equation}
If $$\|\tilde{u}\|_{S(\Lambda_{s_c})}\leq A,\ \   \|e\|_{S'(\Lambda_{-s_c})}\leq \epsilon\ \
{\rm and}\ \ \|e^{-i(t-t_0)(-\Delta)^s}(u(t_0)-\tilde{u}(t_0)\|_{S(\Lambda_{s_c})}\leq \epsilon,$$
then$$\|u\|_{S(\Lambda_{s_c})}\leq c=c(A)<\infty.$$
\end{proposition}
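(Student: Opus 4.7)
The plan is to control $w := u - \tilde{u}$ in the Strichartz norm $S(\Lambda_{s_c})$ and then conclude by the triangle inequality, running a Kenig--Merle type stability argument on finitely many short subintervals. Subtracting \eqref{eq1} for $u$ from \eqref{0002} for $\tilde{u}$ (reading the nonlinearity in \eqref{0002} consistently with \eqref{eq1}), $w$ solves
\begin{equation*}
iw_t - (-\Delta)^s w + \bigl(|u|^{p-1}u - |\tilde{u}|^{p-1}\tilde{u}\bigr) = -e, \qquad w(t_0) = u(t_0) - \tilde{u}(t_0),
\end{equation*}
so by Duhamel and Proposition \ref{prostrichartz} it suffices to estimate the difference of nonlinearities plus $e$ in the dual norm $S'(\Lambda_{-s_c})$, together with the already-small homogeneous term $U(t-t_0)w(t_0)$ in $S(\Lambda_{s_c})$.

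Because only a global bound $\|\tilde{u}\|_{S(\Lambda_{s_c};I)} \leq A$ is available, I would first partition $I$ into $J = J(A,\delta)$ subintervals $I_j = [t_j,t_{j+1}]$ such that $\|\tilde{u}\|_{S(\Lambda_{s_c};I_j)} \leq \delta$ on each $I_j$, where $\delta$ depends only on the Strichartz constants. On a single $I_j$, applying the Strichartz estimates to the Duhamel formula and the pointwise inequality
\begin{equation*}
\bigl||u|^{p-1}u - |\tilde{u}|^{p-1}\tilde{u}\bigr| \leq C_p\bigl(|\tilde{u}|^{p-1} + |w|^{p-1}\bigr)|w|,
\end{equation*}
combined with the fractional product rule as in the proof of Proposition \ref{sd}, yields an inequality of the schematic form
\begin{equation*}
\|w\|_{S(\Lambda_{s_c};I_j)} \leq C\eta_j + C\epsilon + C\delta^{p-1}\|w\|_{S(\Lambda_{s_c};I_j)} + C\|w\|_{S(\Lambda_{s_c};I_j)}^p,
\end{equation*}
where $\eta_j := \|U(t-t_j)w(t_j)\|_{S(\Lambda_{s_c};I_j)}$. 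Choosing $\delta$ so that $C\delta^{p-1} \leq 1/2$ and running a standard continuity argument then gives $\|w\|_{S(\Lambda_{s_c};I_j)} \leq 2C(\eta_j + \epsilon)$, provided $\eta_j + \epsilon$ is sufficiently small.

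The main obstacle is the iteration across the $J$ subintervals: $\eta_{j+1}$ is not simply $\eta_j$ but inherits contributions from the Duhamel integral on $I_j$, producing a recursion of the form $\eta_{j+1} \leq C\eta_j + C\|w\|_{S(\Lambda_{s_c};I_j)}^p + C\epsilon$, i.e.\ a constant-factor growth per step. Since $J$ is determined by $A$ and $\delta$, one can then select the threshold $\epsilon_0 = \epsilon_0(A,N,p)$ small enough that the geometric factor $C^{J}\epsilon_0$ remains below the smallness required at every stage; this keeps each subinterval bootstrap valid and yields $\|w\|_{S(\Lambda_{s_c};I_j)} \leq c_j(A)\epsilon$. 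Summing over $j=1,\dots,J$ produces $\|w\|_{S(\Lambda_{s_c};I)} \leq c(A)\epsilon$, whence $\|u\|_{S(\Lambda_{s_c})} \leq A + c(A) \leq c(A)$. The restriction $s \geq N/(2N-1)$ enters only through the availability of the full radial Strichartz set used at each step.
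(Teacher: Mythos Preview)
Your proposal is correct and follows essentially the same Kenig--Merle stability scheme as the paper's own proof: partition $I$ into $N=N(A)$ subintervals on which $\|\tilde u\|_{S(\Lambda_{s_c};I_j)}\leq\delta$, close a bootstrap on each $I_j$ via the inhomogeneous Strichartz estimate and the pointwise bound $||u|^{p-1}u-|\tilde u|^{p-1}\tilde u|\lesssim(|\tilde u|^{p-1}+|w|^{p-1})|w|$, then iterate the Duhamel formula across intervals to obtain $\eta_{j}\lesssim 2^{j}\epsilon$ and finally choose $\epsilon_0$ small in terms of $N(A)$. The only superfluous detail is your invocation of the fractional product rule, which is not needed here since the $S(\Lambda_{s_c})$ norm involves no derivatives and H\"older alone suffices.
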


\begin{proof}
Let   $u=\tilde u+w$, where $\tilde u$ is the solution of (\ref{0002}) and   $w$ is the solution of
\begin{align}\label{w}
i\partial_t w-(-\Delta)^sw-|w+\tilde u|^{p-1}(w+\tilde u)+|\tilde u|^{p-1}\tilde u+e=0.
\end{align}
For any $t_0\in I$,  $I=(T_1,t_0]\cup[t_0,T_2)$.  We need only
consider on $I_+=[t_0,T_2)$, since the case on $I_-=(T_1,t_0]$ can be
considered similarly.  Since $\|\tilde{u}\|_{S(\Lambda_{s_c})}\leq A$, we can
partition $[t_0,T_2)$ into $N=N(A)$ intervals $I_j =[t_j,t_{j+1}]$
such that for each $j$, the quantity
$\|\tilde{u}\|_{S(\Lambda_{s_c};I_j)}<\delta$ is suitably small with $\delta$
to be chosen later. The integral equation of $w$ with initial time
$t_j$ is
\begin{align}\label{w2}
  w(t)=e^{-i(t-t_j)(-\Delta)^s}w(t_j)-i\int_{t_j}^te^{-i(t-s)(-\Delta)^s}[|w+\tilde u|^{p-1}(w+\tilde u)-|\tilde u|^{p-1}\tilde u-e](s)ds.
\end{align}
Using the inhomogeneous Strichartz estimates \eqref{inhomo} on $I_j$, we obtain that for some $(q_1,r_1)\in\Lambda_{-s_c}$,
\begin{align*}
&\|w\|_{S(\Lambda_{s_c};I_j)}\\
\leq&\|e^{-i(t-t_j)(-\Delta)^s}w(t_j)\|_{S(\Lambda_{s_c};I_j)}
+c\||w+\tilde u|^{p-1}(w+\tilde u)+|\tilde u|^{p-1}\tilde u\|_{L^{q_1'}(I;L^{r_1'})}+\|e\|_{S'(\Lambda_{-s_c})}\\
\leq&\|e^{-i(t-t_j)(\Delta)^s}w(t_j)\|_{S(\Lambda_{s_c};I_j)}
+c\| \tilde u\|^{p-1}_{S(\Lambda_{s_c};I_j)}\|w\|_{S(\Lambda_{s_c};I_j)}+c\|w\|_{S(\Lambda_{s_c};I_j)}^p+\|e\|_{S'(\Lambda_{-s_c})}\\
\leq&\|e^{-i(t-t_j)(-\Delta)^s}w(t_j)\|_{S(\Lambda_{s_c};I_j)} +c\delta^{p-1}\|w\|_{S(\Lambda_{s_c};I_j)}+c\|w\|_{S(\Lambda_{s_c};I_j)}^p+c\epsilon_0.
\end{align*}
If
\begin{align}\label{condition}
\delta\leq\left(\frac1{4c}\right)^{\frac1{p-1}},\ \ \ \
\|e^{-i(t-t_j)(-\Delta)^s}w(t_j)\|_{S(\Lambda_{s_c};I_j)}+c\epsilon_0\leq\frac12\left(\frac1{4c}\right)^{\frac1{p-1}},
\end{align}
then 
\begin{align*}
\|w\|_{S(\Lambda_{s_c};I_j)}\leq2\|e^{-i(t-t_j)(-\Delta)^s}w(t_j)\|_{S(\Lambda_{s_c};I_j)}+c\epsilon_0.
\end{align*}
Now take $t=t_{j+1}$ in \eqref{w2}, and apply $e^{-i(t-t_{j+1})(-\Delta)^s}$
to both sides to obtain
\begin{align*}
 e^{-i(t-t_{j+1})(-\Delta)^s} w(t_{j+1})&=e^{-i(t-t_{j})(-\Delta)^s}w(t_j)\\
 &-i\int_{t_j}^{t_{j+1}}e^{-i(t-s)(-\Delta)^s}[|w+\tilde u|^{p-1}(w+\tilde u)-|\tilde u|^{p-1}\tilde u-e](s)ds.
\end{align*}
Since the Duhamel integral is confined to $I_j$, using the inhomogeneous Strichart'z estimates
and following a similar argument as above, we obtain that
\begin{align*}
&\| e^{-i(t-t_{j+1})(-\Delta)^s}
w(t_{j+1})\|_{S(\Lambda_{s_c};I_+)}\\&\leq\|e^{-i(t-t_{j})(-\Delta)^s}w(t_j)\|_{S(\Lambda_{s_c};I_+)}
+c\delta^{p-1}\|w\|_{S(\Lambda_{s_c};I_j)}+c\|w\|_{S(\Lambda_{s_c};I_j)}^p+c\epsilon_0\\
&\leq2\|e^{-i(t-t_j)(-\Delta)^s}w(t_j)\|_{S(\Lambda_{s_c};I_j)}+c\epsilon_0.
\end{align*}
Iterating beginning with $j=0$, we obtain
\begin{align*}
\| e^{-i(t-t_{j})(-\Delta)^s}
w(t_{j})\|_{S(\Lambda_{s_c};I_+)}\leq2^{j}\|e^{-i(t-t_{0})(-\Delta)^s}w(t_0)\|_{S(\Lambda_{s_c};I_j)}
+(2^{j}-1)c\epsilon_0 \leq2^{j+2}c\epsilon_0.
\end{align*}
To accommodate the conditions \eqref{condition} for all intervals $I_j$ with $0\leq j\leq N-1$, we require
\begin{align}\label{condition'}
2^{N+2}c\epsilon_0\leq(\frac1{4c})^{\frac1{p-1}}.
\end{align}
Finally,$$\|w\|_{S(\Lambda_{s_c};I_+)}\leq
\sum_{j=0}^{N-1}2^{j+2}c\epsilon_0+cN\epsilon_0\leq
c(N)\epsilon_0,$$ which implies $\|w\|_{S(\Lambda_{s_c};I_+)}\leq c(A)\epsilon_0$
since $N=N(A)$, concluding the proof.

\end{proof}

\vspace{0.5cm}

\section{Variational Characteristic and Invariant Sets}

First, we collect some variational properties of $Q$, as follows.
\begin{lemma}  (\cite{Zhu20162})  \label{reQ} Let $N\geq 2$, $0<s<1$ and  $1+\frac{4s}{N}<p<1+\frac{4s}{N-2s}$.
 Suppose that
 $Q$ is  the ground-state
solution  of   (\ref{eqQ}). Then, we have  the following properties.
\[\begin{array}{lll}\vspace{0.3cm}

 & (i) \quad \|Q\|_{p+1}^{p+1}=\frac{2s(p+1)}{N(p-1)}\|Q\|_{\dot{H}^s}^2=\frac{2s(p+1)}{2s(p+1)-N(p-1)}\|Q\|_2^2.\\
 \vspace{0.3cm}

& (ii)\quad E[Q]=\frac 12 \int \overline{Q}(-\triangle)^s  Q dx-\frac {1}{p+1} \|Q\|_{p+1}^{p+1}=\frac{N(p-1)-4s}{2N(p-1)}\|Q\|_{\dot{H}^s}^2.\\
\vspace{0.3cm}

&(iii) \quad E[Q]M[Q]^{\frac{s-s_c}{s_c}}=\frac{N(p-1)-4s}{4s-(N-2s)(p-1)}\|Q\|_2^{\frac{2s}{s_c}}.\\
\vspace{0.3cm}

&(iv)\quad \|Q\|_{\dot{H}^s}^{2}M[Q]^{\frac{s-s_c}{s_c}}=\frac{N(p-1)}{4s-(N-2s)(p-1)}\|Q\|_2^{\frac{2s}{s_c}}.\\
\vspace{0.3cm}

&(v)\quad  C_{GN}=\frac{\|Q\|_{p+1}^{p+1}}{\|Q\|_2^{p+1-\frac{N(p-1)}{2s}}\|Q\|_{\dot H^s}^{\frac{N(p-1)}{2s}}}
=\frac{2s(p+1)}{N(p-1)}\frac{1}{\|Q\|_2^{p+1-\frac{N(p-1)}{2s}}\|Q\|_{\dot H^s}^{\frac{N(p-1)}{2s}-2}}.
\end{array}  \]
\end{lemma}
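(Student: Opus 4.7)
The lemma is a collection of algebraic identities that all flow from two standard scalar relations for the ground state: a Nehari-type identity obtained by testing the equation against $Q$, and a Pohozaev-type identity obtained by scaling. So my plan is to derive those two identities first and then let (i)--(v) fall out by linear algebra and substitution.

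\textbf{Step 1: Nehari identity.} I would multiply the ground-state equation $(-\Delta)^s Q + Q - |Q|^{p-1}Q = 0$ by $\overline{Q}$ and integrate over $\mathbb{R}^N$. Using Plancherel to rewrite the $(-\Delta)^s$ term as $\|Q\|_{\dot H^s}^2$, this gives the first identity
\begin{equation*}
\|Q\|_{\dot H^s}^2 + \|Q\|_2^2 = \|Q\|_{p+1}^{p+1}.
\end{equation*}

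\textbf{Step 2: Pohozaev identity.} For the fractional Laplacian the usual $x\cdot\nabla Q$ multiplier argument needs to be justified via the extension/scaling; alternatively, one may use the fact that $Q$ is a critical point of the action $S(u)=\tfrac12\|u\|_{\dot H^s}^2+\tfrac12\|u\|_2^2-\tfrac{1}{p+1}\|u\|_{p+1}^{p+1}$ and test along the dilation $Q_\lambda(x)=Q(\lambda x)$. Setting $\tfrac{d}{d\lambda}S(Q_\lambda)|_{\lambda=1}=0$ yields
\begin{equation*}
\tfrac{N-2s}{2}\|Q\|_{\dot H^s}^2 + \tfrac{N}{2}\|Q\|_2^2 = \tfrac{N}{p+1}\|Q\|_{p+1}^{p+1}.
\end{equation*}
This is the main technical step, and the one I expect to be the real obstacle because the Pohozaev identity for $(-\Delta)^s$ with $0<s<1$ is not as immediate as in the local case; one has to invoke the results of Frank--Lenzmann (or the Caffarelli--Silvestre extension) to make the scaling argument rigorous. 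Granted this, however, everything else is linear algebra.

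\textbf{Step 3: Solving the linear system.} The two identities above form a $2\times 2$ linear system in the three quantities $\|Q\|_{\dot H^s}^2$, $\|Q\|_2^2$, $\|Q\|_{p+1}^{p+1}$; I would solve it to express $\|Q\|_2^2$ and $\|Q\|_{p+1}^{p+1}$ in terms of $\|Q\|_{\dot H^s}^2$:
\begin{equation*}
\|Q\|_2^2 = \tfrac{2s(p+1)-N(p-1)}{N(p-1)}\,\|Q\|_{\dot H^s}^2,\qquad \|Q\|_{p+1}^{p+1} = \tfrac{2s(p+1)}{N(p-1)}\,\|Q\|_{\dot H^s}^2.
\end{equation*}
This is exactly item (i). Inserting the expression for $\|Q\|_{p+1}^{p+1}$ into the definition of $E[Q]$ and simplifying gives (ii). For (iii) and (iv), I would use $s_c = \tfrac{N}{2}-\tfrac{2s}{p-1}$ to compute the exponent $\tfrac{s-s_c}{s_c}$, check that $\tfrac{2s}{s_c}$ is precisely the scaling exponent that makes $M[Q]^{(s-s_c)/s_c}\|Q\|_{\dot H^s}^2$ and $M[Q]^{(s-s_c)/s_c}E[Q]$ depend only on $\|Q\|_2^{2s/s_c}$ (since $\|Q\|_2^2$ and $\|Q\|_{\dot H^s}^2$ are proportional by Step 3), and then substitute the proportionality constants from (i).

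\textbf{Step 4: The sharp Gagliardo--Nirenberg constant.} The sharp constant $C_{GN}$ in \eqref{G-N} is attained at $Q$, so equality holds with $v=Q$. Substituting the identity $\|Q\|_{p+1}^{p+1} = \tfrac{2s(p+1)}{N(p-1)}\|Q\|_{\dot H^s}^2$ from (i) into the equality case of \eqref{G-N} and isolating $C_{GN}$ yields (v). The consistency with \eqref{CGN} is then just a bookkeeping check on the exponent $\tfrac{N(p-1)}{2s}-2$.
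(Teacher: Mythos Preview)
The paper does not give its own proof of this lemma; it is simply quoted from \cite{Zhu20162}. Your outline is the standard derivation and is correct: the Nehari identity from testing \eqref{eqQ} against $\overline{Q}$ together with the Pohozaev-type relation from dilation criticality give a $2\times 2$ linear system whose solution is exactly (i), and (ii)--(v) follow by straightforward substitution using the definition of $s_c$ and the fact that equality holds in \eqref{G-N} at $Q$.

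Your caveat about Step~2 is well placed: the only point requiring genuine care is justifying the Pohozaev identity for $(-\Delta)^s$. In the cited works this is handled not by the $x\cdot\nabla Q$ multiplier but by the variational characterization of $Q$ as a minimizer of the Weinstein functional $J(u)=\|u\|_2^{p+1-\frac{N(p-1)}{2s}}\|u\|_{\dot H^s}^{\frac{N(p-1)}{2s}}/\|u\|_{p+1}^{p+1}$, for which the two-parameter scaling $u\mapsto \mu u(\lambda\,\cdot)$ gives two Euler--Lagrange relations at the minimum that are equivalent to your Nehari/Pohozaev pair (and which require no regularity beyond $Q\in H^s$). Either route yields the same linear system, so your plan matches what the reference does in substance.
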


\begin{remark}
 In fact, Caffarelli and Silvestre in \cite{Caffarelli-Silvestre2007} first proposed  a  general fractional Laplacian.  And  then many researchers have been studying  the time dependent and independent of  fractional  nonlinear  Schr\"{o}dinger  equations   (see\cite{Chen-Li-Li2017,Felmer-Quaas-Tan2012,g-h17na,g-h17jde,Liu-Ma2016,ZhangZhu2015,Zhu2016}).

\end{remark}

  Let $u\in H^{s}\setminus\{0\}$, and define
\[K_g=\{\|  u \|_{\dot{H}^s}^{2}M[u]^{\frac{s-s_c}{s_c}}< \|Q\|_{\dot{H}^{s}}^{2}M[Q]^{\frac{s-s_c}{s_c}},\  E[u]M[u]^{\frac{s-s_c}{s_c}}<  E[Q]M[Q]^{\frac{s-s_c}{s_c}}\}.\]
\begin{proposition} \label{invariant set}  Let $N\geq 2$, $0<s<1$ and  $1+\frac{4s}{N}<p<1+\frac{4s}{N-2s}$.
Let $Q$ be the ground-state solution of
(\ref{eqQ}).  Then
$K_g$  is invariant manifold of (\ref{eq1}).
\end{proposition}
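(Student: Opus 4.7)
The plan is to use conservation of mass and energy together with the sharp Gagliardo--Nirenberg inequality \eqref{G-N}--\eqref{CGN}, reducing the invariance question to the analysis of a single one-variable function, after which a continuity argument in $t$ closes the proof. Since $M[u(t)]$ and $E[u(t)]$ are conserved along the flow of \eqref{eq1}, the second defining inequality of $K_g$, namely $E[u(t)]M[u(t)]^{(s-s_c)/s_c} < E[Q]M[Q]^{(s-s_c)/s_c}$, is automatic. Hence only the $\dot H^s$-condition requires genuine work.

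Inserting \eqref{G-N} into the energy gives
$$E[u(t)] \geq \tfrac12 \|u(t)\|_{\dot H^s}^2 - \tfrac{C_{GN}}{p+1}\|u(t)\|_2^{p+1-a}\|u(t)\|_{\dot H^s}^a,\qquad a:=\tfrac{N(p-1)}{2s}>2.$$
I would multiply both sides by $M[u(t)]^{(s-s_c)/s_c}$ and introduce the dimensionless quantity
$$X(t) := \left(\frac{M[u(t)]^{(s-s_c)/s_c}\|u(t)\|_{\dot H^s}^2}{M[Q]^{(s-s_c)/s_c}\|Q\|_{\dot H^s}^2}\right)^{1/2}.$$
A direct algebraic check using the explicit value of $C_{GN}$ in \eqref{CGN} and the identities in Lemma \ref{reQ} shows that the exponent on the ratio $M[u_0]/M[Q]$ collapses to zero; this cancellation is precisely the manifestation of the fact that both $E[u]M[u]^{(s-s_c)/s_c}$ and $\|u\|_{\dot H^s}^2 M[u]^{(s-s_c)/s_c}$ are invariant under $u \mapsto \mu^{2s/(p-1)}u(\mu\,\cdot)$. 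The resulting clean bound is
$$E[u(t)]M[u(t)]^{(s-s_c)/s_c} \geq M[Q]^{(s-s_c)/s_c}\|Q\|_{\dot H^s}^2 \, g(X(t)),\qquad g(X):=\tfrac12 X^2 - \tfrac{1}{a}X^a.$$

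A one-variable analysis gives $g'(X)=X(1-X^{a-2})$, so $g$ is strictly increasing on $[0,1]$ and strictly decreasing on $[1,\infty)$, with unique maximum $g(1)=(a-2)/(2a)=E[Q]/\|Q\|_{\dot H^s}^2$, the last equality following from Lemma \ref{reQ}(ii). Combining this with conservation of $E,\,M$ and the hypothesis $u_0 \in K_g$ yields
$$g(X(t)) \leq \frac{E[u_0]M[u_0]^{(s-s_c)/s_c}}{M[Q]^{(s-s_c)/s_c}\|Q\|_{\dot H^s}^2} < g(1)$$
for every $t$ in the maximal interval of existence. Since $u \in C(I;H^s)$ the map $t \mapsto X(t)$ is continuous, and by assumption $X(0)<1$. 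The strict inequality $g(X(t)) < g(1)$ precludes $X(t_0)=1$, so continuity forces $X(t)<1$ throughout $I$, which is the missing defining inequality of $K_g$.

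The only real obstacle is the algebraic bookkeeping needed to verify that the power of $M[u_0]/M[Q]$ in the Gagliardo--Nirenberg bound actually collapses, so that the right-hand side becomes a function of the single scale-invariant variable $X$; once this normalization is performed, the remaining argument is elementary calculus together with continuity of the flow in $H^s$.
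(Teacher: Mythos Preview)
Your argument is correct and follows essentially the same route as the paper: both proofs insert the sharp Gagliardo--Nirenberg inequality into the conserved quantity $E[u]M[u]^{(s-s_c)/s_c}$, reduce to a one-variable function with a strict maximum at the ground-state level, and close with a continuity-in-$t$ argument. The only cosmetic difference is that you work with the normalized variable $X(t)$ and the function $g(X)=\tfrac12 X^2-\tfrac1a X^a$ having its maximum at $X=1$, whereas the paper uses the unnormalized quantity $y=\|u\|_2^{(s-s_c)/s_c}\|D^s u\|_2$ and $f(y)=\tfrac12 y^2-\tfrac{C_{GN}}{p+1}y^a$ with maximum at $y_1=\|Q\|_2^{(s-s_c)/s_c}\|D^s Q\|_2$; these are rescalings of one another and the algebraic collapse you flag is exactly the verification that $f(y_1)=M[Q]^{(s-s_c)/s_c}E[Q]$.
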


\begin{proof}
It follows from the conservation of energy and the sharp Gagaliardo-Nirenberg inequality  \eqref{G-N} that \begin{align*}
M[u]^{\frac{s-s_c}{s_c}}E[u]=&\frac12 \|u(t)\|^{\frac{2(s-s_c)}{s_c}}_{2}\|D^s u(t)\|^2_{2}
-\frac1{p+1}\|u\|_{p+1}^{p+1}\|u\|_2^{\frac{2(s-s_c)}{s_c}}\\
\geq& \frac12 (\|u(t)\|^{\frac{s-s_c}{s_c}}_{2}\|D^s u(t)\|_{2})^2
-\frac{C_{GN}}{p+1}(\|u(t)\|^{\frac{s-s_c}{s_c}}_{2}\|D^s u(t)\|_{2})^{\frac{\gamma}{s}}.
\end{align*}
Now, we define $f(y)=\frac12y^2-\frac1{p+1}C_{GN}y^{\frac{N(p-1)}{2s}}$.  We find that  $f(y)$ has the following properties:
$f'(y)=y\left(1-C_{GN}\frac{N(p-1)}{2s(p+1)}y^{\frac{N(p-1)-4s}{2s}}\right)$, and
thus,  $y_0=0$ and
$y_1=\|Q\|^{\frac{s-s_c}{s_c}}_{2}\|D^s Q\|_{2}$ are two roots of $f'(y)=0$, which implies that  $f$ has a local minimum at $y_0$ and a local maximum at $y_1$. From Lemma \ref{reQ}, we have
$f_{max}=f(y_1)=M[Q]^{\frac{s-s_c}{s_c}}E[Q]$, and for all $t\in I$ \begin{align}\label{1}
f(\|u(t)\|^{\frac{s-s_c}{s_c}}_{2}\|D^s u(t)\|_{2})\leq M[u(t)]^{\frac{s-s_c}{s_c}}E[u(t)]=M[u_0]^{\frac{s-s_c}{s_c}}E[u_0]
<f(y_1).
\end{align}

If   $u_0\in K_g$, i.e., $\|u_{0}\|^{\frac{s-s_c}{s_c}}_{2}\|D^s
u_{0}\|_{2}<y_1$, then by  \eqref{1} and the continuity of $\|D^s
u(t)\|_2$ in $t$, we claim that
 $t\in \mathbb{R},$~
\begin{equation}\label{2.3'}
\|  u(t) \|_{\dot{H}^s}^{2}M[u(t)]^{\frac{s-s_c}{s_c}}< \|Q\|_{\dot{H}^{s}}^{2}M[Q]^{\frac{s-s_c}{s_c}}.
\end{equation}
Indeed, if (\ref{2.3'}) is not true,
there must be  $t_1\in I$ such that $\|u(t_1)\|^{\frac{s-s_c}{s_c}}_{2}\|D^s
u(t_1)\|_{2}\geq y_1$.
Since $u(t,x)\in C(I;H^s)$ is
continuous with respect to $t$, we can find a  $0<t_0\leq t_1$ such
that $\|u(t_0)\|^{\frac{s-s_c}{s_c}}_{2}\|D^s
u(t_0)\|_{2}= y_1$. Thus, by injecting
$E[u(t_0)]=E[u_0]$ and $\|u(t_0)\|^{\frac{s-s_c}{s_c}}_{2}\|D^s
u(t_0)\|_{2}= y_1$ into  (\ref{1}), we see that
\[f(y_1)>M[u_0]^{\frac{s-s_c}{s_c}}E[u_0]=M[u(t_0)]^{\frac{s-s_c}{s_c}}E[u(t_0)]\geq f(\|u(t_0)\|^{\frac{s-s_c}{s_c}}_{2}\|D^s
u(t_0)\|_{2})=f(y_1).\]
This is a contradiction.  This completes the proof.

\end{proof}

\begin{remark}\label{remdelta} In fact, using the same argument in Proposition \ref{invariant set}, we can obtain a precise estimate.
Specially,  if the initial data is such that $$\|  u_0 \|_{\dot{H}^s}^{2}M[u_0]^{\frac{s-s_c}{s_c}}< \|Q\|_{\dot{H}^{s}}^{2}M[Q]^{\frac{s-s_c}{s_c}}, $$ then, we can chose a $\delta>0$ such that
$M[u]^{\frac{s-s_c}{s_c}}E[u]<(1-\delta)M[Q]^{\frac{s-s_c}{s_c}}E[Q]$.
Moreover, for the solution $u=u(t)$ with the above initial data we can find  $\delta_0=\delta_0(\delta)$ such that
$\|u(t)\|^{\frac{s-s_c}{s_c}}_{2}\|D^su(t)\|_{2}<
(1-\delta_0)\|Q\|^{\frac{s-s_c}{s_c}}_{2}\|D^s Q\|_{2}$.
\end{remark}

By the invariance of $K_g$, we see that (\ref{2.3'}) is true.
 In
particular, the $H^s$-norm of the solution $u$ is bounded, which
proves the global existence of the solution in this case.

\begin{theorem}\label{th3.5} Let $N\geq 2$, $0<s<1$ and  $1+\frac{4s}{N}<p<1+\frac{4s}{N-2s}$. Assume that $u_{0}\in H^{s}$, and   $I=(T_{-},T_{+})$  is the
maximal existence interval of  ~$u(t)$~ solving ~\eqref{eq1}.
 If  $u_0\in K_g$,
then ~$I=(-\infty,+\infty)$,~ i.e., ~$u(t)$~ exists globally in
time.
\end{theorem}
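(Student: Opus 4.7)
The plan is to combine the invariance property established in Proposition \ref{invariant set} with the blow-up alternative from the local well-posedness theory. The argument is essentially a bookkeeping exercise, since the real work has been done in setting up $K_g$ as an invariant set.

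First, I would invoke Proposition \ref{invariant set} to conclude that the set $K_g$ is invariant under the flow of \eqref{eq1}. Hence, if $u_0\in K_g$, then $u(t)\in K_g$ for all $t\in I=(T_-,T_+)$. In particular, the first defining inequality of $K_g$ yields, for every $t\in I$,
\begin{equation*}
\|u(t)\|_{\dot H^s}^2\, M[u(t)]^{\frac{s-s_c}{s_c}}<\|Q\|_{\dot H^s}^2\, M[Q]^{\frac{s-s_c}{s_c}}.
\end{equation*}

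Next, I would use conservation of mass \eqref{M}, which gives $M[u(t)]=M[u_0]$ for all $t\in I$. Substituting into the inequality above and rearranging yields a time-independent upper bound
\begin{equation*}
\|u(t)\|_{\dot H^s}^2<\|Q\|_{\dot H^s}^2\left(\frac{M[Q]}{M[u_0]}\right)^{\frac{s-s_c}{s_c}}.
\end{equation*}
Combined with $\|u(t)\|_{L^2}^2=M[u_0]$, this provides a uniform-in-$t$ bound on $\|u(t)\|_{H^s}$ throughout $I$.

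Finally, I would appeal to the blow-up alternative from the local theory: either $T_+=+\infty$, or $T_+<+\infty$ and $\|u(t)\|_{H^s}\to+\infty$ as $t\to T_+$ (and symmetrically at $T_-$). Since we have just shown that $\|u(t)\|_{H^s}$ stays bounded on $I$, blow-up is ruled out in both time directions, so $I=(-\infty,+\infty)$ and the solution is global. There is no genuine obstacle in this proof; the only subtlety worth watching is that the factor $M[u(t)]^{(s-s_c)/s_c}$ is nontrivial but is frozen along the flow by mass conservation, which is precisely why the combined quantity $\|u\|_{\dot H^s}^2 M[u]^{(s-s_c)/s_c}$ — tailored to the scaling of the equation — gives a uniform kinetic-energy bound.
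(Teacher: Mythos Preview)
Your proof is correct and follows essentially the same approach as the paper: the paper simply notes that by the invariance of $K_g$ the bound \eqref{2.3'} holds for all $t\in I$, whence the $H^s$-norm of $u$ is bounded and global existence follows from the blow-up alternative. Your write-up is a more detailed version of exactly this argument.
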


\begin{lemma}\label{lemlowerbound}
Let $u_0\in K_g$. Furthermore,
take $\delta>0$ such that
$M[u_0]^{\frac{s-s_c}{s_c}}E[u_0]<(1-\delta)M[Q]^{\frac{s-s_c}{s_c}}E[Q]$.
If $u$ is a solution to problem \eqref{eq1} with initial data $u_0$,
then there exists $C_\delta>0$ such that for all $t\in\mathbb R$,
\begin{align}\label{4.7}
\|D^s u\|_2^2-\frac{N(p-1)}{2s(p+1)}\|u\|_{p+1}^{p+1}\geq
C_\delta\|D^s u\|_2^2.
\end{align}
\end{lemma}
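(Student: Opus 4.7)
The plan is to view the quantity $\|D^su\|_2^2-\frac{N(p-1)}{2s(p+1)}\|u\|_{p+1}^{p+1}$ as the Pohozaev-type functional that vanishes on the ground state $Q$ (by Lemma \ref{reQ}(i)), and to quantify the strict gap coming from the sub-threshold hypothesis via the sharp Gagliardo--Nirenberg inequality. The scaling identity between the exponents $p+1-\tfrac{N(p-1)}{2s}$ and $\tfrac{N(p-1)}{2s}-2$ (both of which appear after applying G--N) with the mass/kinetic combination $\|u\|_2^{(s-s_c)/s_c}\|D^su\|_2$ will give a clean closed form.

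First I would apply the sharp Gagliardo--Nirenberg inequality \eqref{G-N} with the explicit $C_{GN}$ from \eqref{CGN} to get
\begin{equation*}
\tfrac{N(p-1)}{2s(p+1)}\|u\|_{p+1}^{p+1}\le \frac{\|u\|_2^{\alpha}\,\|D^su\|_2^{\beta}}{\|Q\|_2^{\alpha}\,\|Q\|_{\dot H^s}^{\beta-2}},
\end{equation*}
where I set $\alpha:=p+1-\tfrac{N(p-1)}{2s}$ and $\beta:=\tfrac{N(p-1)}{2s}$. The $L^2$-supercritical assumption $p>1+\tfrac{4s}{N}$ gives $\beta-2>0$, which is the structural ingredient that will make the bound usable. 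Factoring $\|D^su\|_2^2$ out, the desired inequality reduces to showing
\begin{equation*}
\frac{\|u\|_2^{\alpha}\,\|D^su\|_2^{\beta-2}}{\|Q\|_2^{\alpha}\,\|Q\|_{\dot H^s}^{\beta-2}}\le 1-C_\delta
\end{equation*}
for some $C_\delta>0$ depending only on $\delta$.

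The key algebraic step is to verify the identity $\alpha=(\beta-2)\tfrac{s-s_c}{s_c}$. A direct substitution of $s_c=\tfrac N2-\tfrac{2s}{p-1}$ shows that both sides equal $\tfrac{2s(p+1)-N(p-1)}{2s}$, so this identity indeed holds. It allows the left-hand side above to be rewritten as a pure power of the invariant combination:
\begin{equation*}
\frac{\|u\|_2^{\alpha}\,\|D^su\|_2^{\beta-2}}{\|Q\|_2^{\alpha}\,\|Q\|_{\dot H^s}^{\beta-2}}=\left(\frac{\|u\|_2^{(s-s_c)/s_c}\,\|D^su\|_2}{\|Q\|_2^{(s-s_c)/s_c}\,\|Q\|_{\dot H^s}}\right)^{\beta-2}.
\end{equation*}

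Finally I invoke Remark \ref{remdelta}: mass conservation together with the sub-threshold hypothesis produces $\delta_0=\delta_0(\delta)>0$ such that $\|u(t)\|_2^{(s-s_c)/s_c}\|D^su(t)\|_2<(1-\delta_0)\|Q\|_2^{(s-s_c)/s_c}\|Q\|_{\dot H^s}$ for every $t$. Inserting this into the displayed identity gives the ratio $\le(1-\delta_0)^{\beta-2}<1$, so one can take $C_\delta:=1-(1-\delta_0)^{\beta-2}>0$ and the lemma follows. The main obstacle is purely algebraic: correctly identifying the exponent relation $\alpha=(\beta-2)\tfrac{s-s_c}{s_c}$ so that the $L^2$-supercritical structure manifests as a strictly less-than-one ratio under the invariant-set condition.
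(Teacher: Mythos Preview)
Your proof is correct and follows essentially the same approach as the paper's: both apply the sharp Gagliardo--Nirenberg inequality with the explicit constant $C_{GN}$, factor out $\|D^su\|_2^2$, and reduce to the uniform bound on the ratio $\|u\|_2^{(s-s_c)/s_c}\|D^su\|_2/(\|Q\|_2^{(s-s_c)/s_c}\|Q\|_{\dot H^s})$ from Remark~\ref{remdelta}. The only cosmetic difference is that the paper packages the final step via the auxiliary function $G(y)=y^2-y^{N(p-1)/(2s)}$ and the observation $G(y)\ge C_\delta y^2$ on $[0,1-\delta_0]$, whereas you compute the constant $C_\delta=1-(1-\delta_0)^{\beta-2}$ explicitly; these are equivalent.
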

\begin{proof}
Let $\delta_0=\delta_0(\delta)>0$ be defined in Remark \ref{remdelta},.
Then, for all $t\in\mathbb R$, we have
\begin{align}\label{low}
\|u(t)\|^{\frac{s-s_c}{s_c}}_{2}\|D^s u(t)\|_{2}<
(1-\delta_0)\|Q\|^{\frac{s-s_c}{s_c}}_{2}\|D^s Q\|_{2}.
\end{align} Denote
$$H(t)=\frac1{\|Q\|^{\frac{2(s-s_c)}{s_c}}_{2}\|D^s Q\|^2_{2}}(
\|u(t)\|^{\frac{2(s-s_c)}{s_c}}_{2}\|D^s
u(t)\|^2_{2}-\frac{N(p-1)}{2s(p+1)}\|u\|_{p+1}^{p+1}\|u(t)\|^{\frac{2(s-s_c)}{s_c}}_{2})$$
and $G(y)=y^2-y^{\frac{N(p-1)}{2s}}$. Applying  the sharp Gagliardo-Nirenberg
inequality in \eqref{G-N}, we deduce that
$$H(t)\geq G \left(\frac{\|u(t)\|^{\frac{s-s_c}{s_c}}_{2}\|D^s u(t)\|_{2}}{\|Q\|^{\frac{s-s_c}{s_c}}_{2}\|D^s Q\|_{2}}\right).$$
When $0\leq y\leq1-\delta_0$, by the properties of the function $G(y)$, we deduce that there exists a constant $C_\delta$ such that $g(y)\geq
C_\delta y^2$ provided $0\leq y\leq1-\delta_0$.  This completes the proof.

\end{proof}

\begin{lemma}\label{lemcompare}
(Comparability of  gradient and energy)
Let $u_0\in K_g$. Then,
$$\frac{N(p-1)-4s}{2N(p-1)}\|D^s u(t)\|_{2}^2\leq E[u(t)]\leq\frac{1}{2}\|D^su(t)\|_{2}^2.$$
\end{lemma}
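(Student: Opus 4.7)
The plan is to prove the two inequalities separately. The upper bound $E[u(t)] \leq \frac{1}{2}\|D^s u(t)\|_2^2$ is immediate from the definition $E[u(t)] = \frac{1}{2}\|D^s u(t)\|_2^2 - \frac{1}{p+1}\|u(t)\|_{p+1}^{p+1}$, since the nonlinear term is nonpositive. So the entire content of the lemma is in the lower bound.

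For the lower bound, the core claim to establish is
\begin{equation*}
\frac{1}{p+1}\|u(t)\|_{p+1}^{p+1} \leq \frac{2s}{N(p-1)}\|D^s u(t)\|_2^2,
\end{equation*}
since then
\begin{equation*}
E[u(t)] \geq \Bigl(\tfrac{1}{2} - \tfrac{2s}{N(p-1)}\Bigr)\|D^s u(t)\|_2^2 = \tfrac{N(p-1)-4s}{2N(p-1)}\|D^s u(t)\|_2^2.
\end{equation*}
The cleanest way to obtain the displayed inequality is to invoke Lemma \ref{lemlowerbound}: from $\|D^s u\|_2^2 - \frac{N(p-1)}{2s(p+1)}\|u\|_{p+1}^{p+1} \geq C_\delta \|D^s u\|_2^2 \geq 0$, one reads off $\frac{N(p-1)}{2s(p+1)}\|u\|_{p+1}^{p+1} \leq \|D^s u\|_2^2$, which is exactly what is needed.

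Alternatively, if one wishes a self-contained argument not invoking Lemma \ref{lemlowerbound}, the key step is to combine the sharp Gagliardo--Nirenberg inequality \eqref{G-N} with the formula \eqref{CGN} for $C_{GN}$ to write
\begin{equation*}
\|u\|_{p+1}^{p+1} \leq \frac{2s(p+1)}{N(p-1)}\,\frac{\|u\|_2^{p+1-\frac{N(p-1)}{2s}}\|D^s u\|_2^{\frac{N(p-1)}{2s}-2}}{\|Q\|_2^{p+1-\frac{N(p-1)}{2s}}\|D^s Q\|_2^{\frac{N(p-1)}{2s}-2}}\,\|D^s u\|_2^2,
\end{equation*}
and then observe that the algebraic identity $\frac{s-s_c}{s_c}\bigl(\frac{N(p-1)}{2s}-2\bigr) = p+1 - \frac{N(p-1)}{2s}$ together with the invariance from Proposition \ref{invariant set} (namely $\|u(t)\|_2^{(s-s_c)/s_c}\|D^s u(t)\|_2 \leq \|Q\|_2^{(s-s_c)/s_c}\|D^s Q\|_2$) implies the ratio above is at most $1$.

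The only potential obstacle is purely algebraic, namely verifying that the exponents of $\|u\|_2$ and $\|D^s u\|_2$ coming out of Gagliardo--Nirenberg line up correctly with the power of the invariant quantity $\|u\|_2^{(s-s_c)/s_c}\|D^s u\|_2$; this reduces to a one-line check using $s_c = \frac{N}{2} - \frac{2s}{p-1}$. Since Lemma \ref{lemlowerbound} is already in place, the first route is the preferred one and the proof reduces to a single line.
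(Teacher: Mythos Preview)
Your proposal is correct, and your alternative route---applying the sharp Gagliardo--Nirenberg inequality \eqref{G-N} with the constant \eqref{CGN} and then invoking the invariance \eqref{2.3'} from Proposition \ref{invariant set}---is exactly the paper's argument. Your preferred shortcut via Lemma \ref{lemlowerbound} is also valid (there is no circularity, since that lemma's proof does not rely on Lemma \ref{lemcompare}); the paper simply redoes the Gagliardo--Nirenberg computation directly rather than quoting the preceding lemma.
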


\begin{proof} The second inequality can be obtained directly by
the expression of $E[u(t)]$. The first inequality  follows from \eqref{G-N}, \eqref{CGN} and \eqref{2.3'} that
\begin{align*}\frac{1}{2}\|D^s u\|_{L^2}^2-\frac{1}{p+1}\|u\|_{p+1}^{p+1}&\geq\frac{1}{2}\|D^s  u\|_{L^2}^2
\left(1-\frac{4s}{N(p-1)}\left(\frac{\|D^s  u\|_{2}\|  u\|_{2}^{\frac{s-s_c}{s_c}}}{\|D^s  Q\|_{2}\|  Q\|_{2}^{\frac{s-s_c}{s_c}}}\right)
^{\frac{2s_c}s}\right)\\
&\geq\frac{N(p-1)-4s}{2N(p-1)}\|D^s u\|_{2}^2.
\end{align*}

\end{proof}

At the end of this section, we prove the existence result of the wave operator $\Omega^+: \phi^+\mapsto v_0$. This is important to
establish the scattering theory.

\begin{proposition}\label{wave operator}
(Existence of wave operators) Suppose that $ \phi^+\in H^s$ and that
\begin{equation}\label{4.11}
\frac{1}{2}M[\phi^+]^{\frac{s-s_c}{s_c}}\|D^s\phi^+\|_2^2<M[Q]^{\frac{s-s_c}{s_c}}E[Q].
\end{equation}
Then, there exists $v_0\in H^s$ such that $v$ globally solves \eqref{eq1} with initial data $v_0$ satisfying
$$\|D^sv(t)\|_2\|v_0\|_2^{\frac{s-s_c}{s_c}}\leq\|D^sQ\|_2\|Q\|_2^{\frac{s-s_c}{s_c}},\ \
M[v]=\|\phi^+\|_2^2,\ \  E[v]=\frac{1}{2}\|D^s\phi^+\|_2^2,$$
and
$\lim\limits_{t\rightarrow+\infty}\|v(t)-U(t)\phi^+\|_{H^s}=0$. 
Moreover, if $\|U(t)\phi^+\|_{S(\Lambda_{s_c})}\leq\delta_{sd}$, where $\delta_{sd}$ is defined in Proposition \ref{sd}, then
$$\|v\|_{S(\Lambda_{s_c})}\leq 2\|U(t)\phi^+\|_{S(\Lambda_{s_c})},\ \
         \|D^{s_c}v\|_{S(\Lambda_0)}\leq 2c\|\phi^+\|_{\dot{H}^{s_c}}.$$
\end{proposition}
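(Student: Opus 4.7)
\medskip

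\noindent\textbf{Proof proposal.} The plan is to construct the wave operator in a neighborhood of $+\infty$ by a fixed-point argument applied to the final-value integral equation
$$\Phi(v)(t)=U(t)\phi^+-i\int_t^{+\infty}U(t-\tau)|v|^{p-1}v(\tau)\,d\tau,$$
and then propagate the resulting solution backwards using Theorem \ref{th3.5}. First I would verify that $\|U(t)\phi^+\|_{S(\Lambda_{s_c};[T,\infty))}\to 0$ as $T\to+\infty$, which holds because $\|U(t)\phi^+\|_{S(\Lambda_{s_c})}<\infty$ by Proposition \ref{prostrichartz} and the radial Strichartz estimates. Combined with $\|D^{s_c}U(t)\phi^+\|_{S(\Lambda_0;[T,\infty))}\le c\|\phi^+\|_{\dot H^{s_c}}$ and, at the $H^s$-level, $\|D^s U(t)\phi^+\|_{S(\Lambda_0)}\le c\|\phi^+\|_{\dot H^s}$, this sets up the small-data regime far enough out in time.

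Next, choosing $T$ large and mimicking the computation in Proposition \ref{sd} (using the Strichartz estimates \eqref{inhomo}, the fractional Leibniz rule, and the admissible pairs $(q_c,r_c)\in\Lambda_{s_c}$, $(q',r')\in\Lambda_0'$), I would show that $\Phi$ is a contraction on the ball
$$B_T=\Bigl\{v:\ \|v\|_{S(\Lambda_{s_c};[T,\infty))}\le 2\|U(t)\phi^+\|_{S(\Lambda_{s_c};[T,\infty))},\ \|D^{s_c}v\|_{S(\Lambda_0;[T,\infty))}\le 2c\|\phi^+\|_{\dot H^{s_c}}\Bigr\},$$
possibly augmented by the analogous $\dot H^s$-bound. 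This yields a unique $v$ solving \eqref{eq1} on $[T,+\infty)$ with $\|v(t)-U(t)\phi^+\|_{H^s}\to 0$ as $t\to+\infty$, since \eqref{2.03} combined with the now-finite Strichartz norms gives the decay estimate \eqref{0001} at both $\alpha=0$ and $\alpha=s$. Moreover, under the smallness assumption $\|U(t)\phi^+\|_{S(\Lambda_{s_c})}\le\delta_{sd}$, the same contraction runs directly on $[0,+\infty)$ and delivers the two claimed bounds.

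From $\|v(t)-U(t)\phi^+\|_{H^s}\to 0$ and the unitarity of $U(t)$ on $L^2$ and $\dot H^s$, I read off $\|v(t)\|_2\to\|\phi^+\|_2$ and $\|D^s v(t)\|_2\to\|D^s\phi^+\|_2$; the nonlinear piece $\|v(t)\|_{p+1}^{p+1}$ vanishes in the limit by Sobolev embedding since the Strichartz norm of $v$ on $[t,\infty)$ tends to zero. Passing to the limit in the conservation laws \eqref{E}, \eqref{M} then gives exactly $M[v]=\|\phi^+\|_2^2$ and $E[v]=\tfrac{1}{2}\|D^s\phi^+\|_2^2$.

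It remains to extend $v$ backwards to all of $\mathbb R$ and to verify the $\dot H^s$ bound. Using the identities above together with Lemma \ref{reQ}(ii), the hypothesis \eqref{4.11} rewrites as $M[v]^{(s-s_c)/s_c}E[v]<M[Q]^{(s-s_c)/s_c}E[Q]$, and from Lemma \ref{reQ} one has $2M[Q]^{(s-s_c)/s_c}E[Q]=\tfrac{N(p-1)-4s}{N(p-1)}M[Q]^{(s-s_c)/s_c}\|Q\|_{\dot H^s}^2<M[Q]^{(s-s_c)/s_c}\|Q\|_{\dot H^s}^2$. Hence $M[\phi^+]^{(s-s_c)/s_c}\|D^s\phi^+\|_2^2<M[Q]^{(s-s_c)/s_c}\|Q\|_{\dot H^s}^2$, so for $t$ large enough $v(t)\in K_g$. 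By Proposition \ref{invariant set} and Theorem \ref{th3.5}, $v$ exists globally and remains in $K_g$ for all time; setting $v_0=v(0)$ yields the stated $\dot H^s$-bound. The main technical point—and the place where the restriction on $s$ matters—is closing the contraction: one needs simultaneous control at the $\dot H^{s_c}$ and $\dot H^s$ levels via admissible radial Strichartz pairs, and the fractional Leibniz estimate must match the exponents dictated by $(q_c,r_c)$ as in Propositions \ref{sd} and \ref{h1scattering}.
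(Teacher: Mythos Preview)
Your proposal is correct and follows essentially the same route as the paper: solve the final-value integral equation \eqref{4.12} by a contraction on $[T,\infty)$ for $T$ large (mirroring Proposition~\ref{sd}), read off the mass and energy from the $H^s$-convergence $v(t)-U(t)\phi^+\to 0$ together with the decay $\|U(t)\phi^+\|_{p+1}\to 0$, check via \eqref{4.11} and Lemma~\ref{reQ} that $v(T)\in K_g$, and then evolve back to $t=0$ using Theorem~\ref{th3.5}. The only cosmetic difference is that the paper justifies $\|v(t)\|_{p+1}\to 0$ directly from the linear dispersive decay of $U(t)\phi^+$ in $L^q$, $q\in(2,\tfrac{2N}{N-2s}]$, rather than via vanishing Strichartz norms.
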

\begin{proof} Let $v(t)=FNLS(t)v_0$
be the solution $v(t)$ of  the  fractional NLS \eqref{eq1} with the initial data $v(0)=v_0$.
According to the scattering theory of small initial data(see Proposition \ref{sd}),
  we consider the integral equation
\begin{equation}\label{4.12}
v(t)=U(t)\phi^+-i\int_t^\infty U(t-t^1) |v|^{p-1}v(t^1)dt^1
\end{equation}
 for $t\geq T$ with $T$ large.
From  Proposition \ref{sd},  for sufficiently large $T$, we deduce that
$
\|v\|_{S(\Lambda_{s_c};[T,\infty))}\leq2\delta_{sd},
$ and
$$\|v\|_{S(\Lambda_0;[T,\infty))}+\|D^s v\|_{S(\Lambda_0;[T,\infty))}<2c \|\phi^{+}\|_{H^s}.$$
Thus, by a similar argument when $t>T$,  we obtain
$$\|v-U(t)\phi^+\|_{S(\Lambda_0;[T,\infty))}+\|D^s( v-e^{it\Delta}
\phi^+)\|_{S(\Lambda_0;[T,\infty))} \rightarrow 0
\ \  {\rm as} \  ~T\rightarrow \infty.$$
Hence, $v(t)-U(t)\phi^+\rightarrow0 $ in $H^s,$ and thus, $M[v]=\|\phi^{+}\|_{2}^2$. By the fact
  $U(t)\phi^+\rightarrow0$ in $L^q$ for any $q\in(2,\frac{2N}{N-2s}]$ as $t\rightarrow\infty$, we have
 $\|U(t)\phi^+\|_{p+1}\rightarrow0$.  Moreover, combining this with that $\|D^s U(t)\phi^+\|_2$ is conserved,
we deduce that
 $$E[v]=\lim_{t\rightarrow\infty}(\frac{1}{2}\|D^s U(t)\phi^+\|_2^2
 -\frac{1}{p+1}\|U(t)\phi^+\|_{p+1}^{p+1})=\frac{1}{2}\|D^s \phi^+\|_2^2.$$
By the assumption  \eqref{4.11}, then we obtain $M[v]^{\frac{s-s_c}{s_c}}E[v]<E[Q]M[Q]^{\frac{s-s_c}{s_c}}$. According to
\eqref{4.11} and Remark \ref{reQ} , we deduce that
\begin{align*}
&\lim_{t\rightarrow\infty}\|v(t)\|_2^{\frac{2(s-s_c)}{s_c}}\|D^s v(t)\|_2^2
=\lim_{t\rightarrow\infty}\|U(t)\phi^+\|_2^{\frac{2(s-s_c)}{s_c}}\|D^s U(t)\phi^+\|_2^2\\
=&\|\phi^+\|_2^{\frac{2(s-s_c)}{s_c}}\|D^s \phi^+\|_2^2\leq2E[Q]M[Q]^{\frac{s-s_c}{s_c}}
=\frac{N(p-1)-4s}{N(p-1)}\|Q\|_2^{\frac{2(s-s_c)}{s_c}}\|D^s Q\|_2^2.
\end{align*}
Finally,  we can evolve $v(t)$ from $T$ back to time $0$ as in
     Theorem \ref{th3.5}.  \end{proof}

\section{Critical solution and compactness}

In the previous, we have proved the the global existence part of Theorem 1.1(see Theorem \ref{th3.5}). From now on,
  we   begin to prove  the scattering part of Theorem \ref{th1}.
 $u(t)$ is globally well-posed. According to  Proposition \ref{h1scattering}, we just need  to show that
\begin{equation}\label{Sbound}
\|u\|_{S(\Lambda_{s_c})}<\infty.
\end{equation}
Then, the $H^s$ scattering of  the solution for  Eq.\eqref{eq1} follows.

{\bf We say that $SC(u_0)$ holds if \eqref{Sbound} is true for the solution $u$ with the initial data $u_0$.}

From  Proposition \ref{sd}, we see that  there exists $\delta>0$ such that if $E[u_0]M[u_0]^{\frac{s-s_c}{s_c}}<\delta$ and
$\|u_{0}\|_{2}^{\frac{s-s_c}{s_c}}\|D^s u_{0}\|_{2}<\|Q\|_{2}^{\frac{s-s_c}{s_c}}\|D^s Q\|_{2},$
 then \eqref{Sbound} holds.
Now, for each $\delta$, we define the set $S_\delta$ to be the collection of all such initial data in $H^s$ :
$$S_\delta=\{u_0\in H^s:\ \  E[u_0]M[u_0]^{\frac{s-s_c}{s_c}}<\delta \ \
{\rm and} \ \  M[u_{0}]^{\frac{s-s_c}{s_c}}\|D^s u_{0}\|_{2}^2<M[Q]^{\frac{s-s_c}{s_c}}\|D^s Q\|_{2}^2\}.$$
We also define that $(ME)_c=\sup\{\delta:\ \ u_0\in S_\delta\Rightarrow SC(u_0)\ \  holds \}.$
If $(ME)_c=M[Q]^{\frac{s-s_c}{s_c}}E[Q]$, then we are done. Thus, we assume that
\begin{equation}\label{me}
(ME)_c<M[Q]^{\frac{s-s_c}{s_c}}E[Q].
\end{equation}
Then, there exist   solutions $u_n$  of \eqref{eq1} with $H^s$ initial data $u_{n,0}$
(after rescaling, we may inquire that $u_n$ satisfies $\|u_n\|_2=1$ )
such that $\|D^s u_{n,0}\|_{2}<\|Q\|_{2}^{\frac{s-s_c}{s_c}}\|D^s Q\|_{2}$  and $E[u_{n,0}]\downarrow (ME)_c$ as
$n\rightarrow \infty,$
and   $SC(u_0)$ does not hold for any $n$.

In this section, we will prove that there exists a critical  $H^s$ solution $u_c$ to \eqref{eq1} with initial data
$u_{c,0}$ such that $\| u_{c,0}\|_{2}^{\frac{s-s_c}{s_c}}\|D^s u_{c,0}\|_{2}<\|Q\|_{2}^{\frac{s-s_c}{s_c}}\|D^s Q\|_{2}$
and $M[u_c]^{\frac{s-s_c}{s_c}}E[u_c]= (ME)_c$
for which $SC(u_{c,0})$ does not hold.  Then, we
will show that the set
$\{u_c(\cdot,t)|0\leq t<+\infty\}$ is precompact in  $H^s$. Finally, we will use these properties to obtain  the rigidity theorem in Section 5, which will use to conduct a contradiction. This can be used to finish the proof of Theorem 1.1.

First, we will  introduce a  profile decomposition lemma that is highly similar to that in \cite{radial}, which were firstly proposed by Keraani \cite{keraani2001} for
 for the cubic Schr\"{o}dinger equation, as follows.

\begin{lemma}\label{lpd}
(Profile expansion) Let $\phi_{n}(x)$ be a radial and uniformly bounded
sequence in $H^s$.
Then, for each M, there exists a subsequence of $\phi_{n}$, also
denoted by $\phi_{n}$, and \\
(1) for each $1\leq j\leq M$, there
exists a (fixed in n) profile $\psi^{j}(x)$ in $H^s$,\\
 (2) for each $1\leq j\leq M$, there exists a sequence (in n) of time
shifts $t_{n}^{j}$, \\
(3) there exists a
sequence (in n) of remainders $W_{n}^{M}(x)$ in $H^s$ such that
$$\phi_{n}(x)=\sum_{j=1}^{M}U(-t_{n}^{j})\psi^{j}(x)+W_{n}^{M}(x).$$
The time and space sequences have a pairwise divergence property,
i.e., for $1\leq j\neq k\leq M$, we have
\begin{equation}\label{divergence}
\lim_{n\rightarrow+\infty}
|t_{n}^{j}-t_{n}^{k}|=+\infty.
\end{equation}
The remainder sequence has the following asymptotic smallness
property:
\begin{equation}\label{remainder}
\lim_{M\rightarrow+\infty}[\lim_{n\rightarrow+\infty}\|U(t)W_{n}^{M}\|_{S(\Lambda_{s_c})}]=0.
\end{equation}
For fixed M and any $0\leq \alpha\leq s$, we have the asymptotic
Pythagorean expansion:
\begin{equation}\label{hsexpansion}
\|\phi_{n}\|_{\dot{H}^{\alpha}}^{2}=\sum_{j=1}^{M}\|\psi^{j}\|_{\dot{H}^{\alpha}}^{2}+\|W_{n}^{M}\|_{\dot{H}^{\alpha}}^{2}+o_{n}(1).
\end{equation}
\end{lemma}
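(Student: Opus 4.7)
\textbf{Proof proposal for Lemma \ref{lpd}.} The plan is to construct the profiles $\psi^j$ inductively by extracting, at each stage, a nontrivial weak limit of a time-translated subsequence of the current remainder $W_n^M$, and then to verify pairwise orthogonality, the Pythagorean identity, and the vanishing of the remainder by monotonicity arguments.

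First I would set $W_n^0 := \phi_n$. Assume inductively that the profiles $\psi^1,\dots,\psi^M$, the time shifts $t_n^1,\dots,t_n^M$ and the remainder $W_n^M$ have already been produced in such a way that $U(-t_n^j) W_n^{j-1} \rightharpoonup \psi^j$ weakly in $H^s$ along a subsequence, and $W_n^M = \phi_n - \sum_{j=1}^M U(-t_n^j)\psi^j$. Set $A_M := \lim_{n\to\infty} \|U(t) W_n^M\|_{S(\Lambda_{s_c})}$ (passing to a subsequence if needed). If $A_M \to 0$ as $M\to\infty$, then (\ref{remainder}) holds and the process terminates. Otherwise, we need to extract a new profile.

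The core tool is a refined Strichartz inequality of the form
\[
\|U(t) f\|_{S(\Lambda_{s_c})} \leq C \|f\|_{\dot H^{s_c}}^{1-\theta}\, \Bigl(\sup_{(t,x)} \bigl|\, P_N U(t)f\,(x)\,\bigr|\Bigr)^{\theta}
\]
for some $0 < \theta < 1$ and a well-chosen frequency projector, obtained by a Littlewood--Paley decomposition adapted to the dispersive symbol $|\xi|^{2s}$, combined with the radial Strichartz estimates of Proposition \ref{prostrichartz}. Applied to $W_n^M$, this inequality furnishes a sequence $(t_n^{M+1}, x_n^{M+1})$ at which $|U(t)W_n^M|$ has a definite amount of mass $\gtrsim A_M^{1/\theta}$; radial symmetry of $\phi_n$ forces $x_n^{M+1}$ to be bounded, so after passing to a subsequence we may set $x_n^{M+1}=0$ and obtain a weak limit $\psi^{M+1}$ of $U(-t_n^{M+1})W_n^M$ in $H^s$ with $\|\psi^{M+1}\|_{H^s}$ bounded below by a positive power of $A_M$. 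Then define $W_n^{M+1} := W_n^M - U(-t_n^{M+1})\psi^{M+1}$.

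Pairwise divergence (\ref{divergence}) is proved by contradiction: if along a subsequence $t_n^{M+1}-t_n^k \to \tau\in\mathbb R$ for some $k\leq M$, then $U(t_n^k - t_n^{M+1})\psi^{M+1} \to U(-\tau)\psi^{M+1}$ strongly in $H^s$, and since $U(-t_n^k)W_n^{k-1}\rightharpoonup \psi^k$, one deduces that $\psi^{M+1}$ must have been absorbed into $\psi^k$ at stage $k$, contradicting the fact that $U(-t_n^{M+1})W_n^M\rightharpoonup \psi^{M+1}\neq 0$. The Pythagorean expansion (\ref{hsexpansion}) then follows step by step from the $\dot H^\alpha$-isometry of $U(t)$ and the weak-convergence identity
\[
\|W_n^M\|_{\dot H^\alpha}^2 = \|U(-t_n^{M+1})W_n^M\|_{\dot H^\alpha}^2 = \|\psi^{M+1}\|_{\dot H^\alpha}^2 + \|W_n^{M+1}\|_{\dot H^\alpha}^2 + o_n(1),
\]
the cross terms vanishing by pairwise divergence and weak convergence.

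Finally, taking $\alpha = s$ and iterating the Pythagorean identity against the uniform $H^s$ bound on $\phi_n$ gives $\sum_j \|\psi^j\|_{H^s}^2 \leq \limsup_n \|\phi_n\|_{H^s}^2 < \infty$. Combined with the lower bound $\|\psi^{M+1}\|_{H^s}\gtrsim A_M^{1/\theta}$, this forces $A_M\to 0$, which yields (\ref{remainder}). The main obstacle is the refined Strichartz inequality itself: establishing it in the fractional setting requires a frequency-localized dispersive estimate for $e^{-it(-\Delta)^s}$ compatible with the radial Strichartz range of Proposition \ref{prostrichartz}, and the hypothesis $s\geq \frac{N}{2N-1}$ enters precisely here, since only in that range is $\Lambda_0$ nonempty and the refined estimate closeable by interpolation with the Sobolev embedding $\dot H^{s_c}\hookrightarrow L^{r_c}$. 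Once this inequality is available, the remainder of the argument follows the NLS template of \cite{radial} and \cite{keraani2001} essentially verbatim.
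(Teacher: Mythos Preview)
Your proposal is correct and follows the standard Keraani/Holmer--Roudenko template that the paper itself invokes: the paper does not give a self-contained proof of Lemma \ref{lpd} but defers to the analogous argument for the biharmonic NLS in \cite{qgcpde16}, which is precisely the inductive weak-limit extraction driven by a refined Strichartz inequality that you outline. The only additional point the paper records is that the smallness of the remainder \eqref{remainder} can in fact be upgraded to every pair $(q,r)$ satisfying the gap condition \eqref{gap} with $\theta=s_c$, and in particular to the endpoint $L^\infty_t L^{2N/(N-2s_c)}_x$; this refinement is used later (e.g.\ in the proof of Proposition \ref{critical}) and follows from interpolation once one has the $L^\infty_t L^{2N/(N-2s_c)}_x$ bound, so you may want to note it as a byproduct of your argument.
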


\begin{proof}\label{reproof}
The proof of the above linear profile decomposition for the  fractional NLS is quite similar with that for the  fourth-order nonlinear Schr\"{o}dinger equation in \cite{qgcpde16}. Here, we omit the main proof. But we should point out that
  \eqref{remainder} could  be improved to
\begin{equation}\label{remainder'}
\lim_{M\rightarrow+\infty}[\lim_{n\rightarrow+\infty}\|U(t)W_{n}^{M}\|_{L^qL^r}]=0,\ \ \forall (q,r)\ {\rm satisfies}\ \eqref{gap}\
{\rm with}\ \theta=s_c.
\end{equation}
More precisely,
\begin{equation}\label{remainder''}
\lim_{M\rightarrow+\infty}[\lim_{n\rightarrow+\infty}\|U(t)W_{n}^{M}\|_{L^\infty L^{\frac{2N}{N-2s_c}}}]=0.
\end{equation}
\end{proof}

Using the profile expansion, similar argument as in \cite{qgcpde16} could just be applied to obtain the following   results: Energy expansion,  Existence of a critical solution
and Precompactness of the flow of the critical solution.
Note that we have also proved similar counterparts of these results with respect to the fractional Hartree equation \cite{g-zarxiv1}, and we omit the proof here.
 \begin{lemma}\label{energy  expansion}(Energy Pythagorean expansion)
 In the situation of Lemma \ref{lpd}, we have
 \begin{equation}\label{epe}
E[\phi_{n}]=\sum_{j=1}^{M}E[U(-t_n^j)\psi^{j}]+E[W_{n}^{M}]+o_n(1).
\end{equation}
\end{lemma}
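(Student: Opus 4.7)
Split the energy into kinetic and potential parts, $E[\phi]=\frac{1}{2}\|D^s\phi\|_2^2-\frac{1}{p+1}\|\phi\|_{p+1}^{p+1}$. The kinetic-part identity in \eqref{epe} is immediate from \eqref{hsexpansion} at $\alpha=s$ combined with the isometry $\|D^s U(-t_n^j)\psi^j\|_2=\|D^s\psi^j\|_2$ (since $U(t)$ is unitary on every $\dot H^\alpha$). The main task is therefore to establish the corresponding $L^{p+1}$ additivity
\begin{equation*}
\|\phi_n\|_{p+1}^{p+1}=\sum_{j=1}^M\|U(-t_n^j)\psi^j\|_{p+1}^{p+1}+\|W_n^M\|_{p+1}^{p+1}+o_n(1).
\end{equation*}

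Setting $V_n^M:=\sum_{j=1}^M U(-t_n^j)\psi^j$, the elementary inequality $\big||a+b|^{p+1}-|a|^{p+1}-|b|^{p+1}\big|\leq C_p(|a|^p|b|+|a||b|^p)$ combined with H\"older gives
\begin{equation*}
\Big|\|\phi_n\|_{p+1}^{p+1}-\|V_n^M\|_{p+1}^{p+1}-\|W_n^M\|_{p+1}^{p+1}\Big|\leq C\big(\|V_n^M\|_{p+1}^p\|W_n^M\|_{p+1}+\|V_n^M\|_{p+1}\|W_n^M\|_{p+1}^p\big).
\end{equation*}
Since $\|V_n^M\|_{p+1}$ is uniformly bounded (via \eqref{hsexpansion} and Sobolev), the right-hand side is controlled by $\|W_n^M\|_{p+1}$. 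To obtain smallness, I would evaluate the refined remainder estimate \eqref{remainder''} at $t=0$: with $p_c=\tfrac{2N}{N-2s_c}=\tfrac{N(p-1)}{2s}$, this yields $\lim_{M\to\infty}\lim_{n\to\infty}\|W_n^M\|_{p_c}=0$. For the $L^2$-supercritical, energy-subcritical range $1+\tfrac{4s}{N}<p<1+\tfrac{4s}{N-2s}$, a direct computation shows $2<p_c\leq p+1<\tfrac{2N}{N-2s}$, so interpolating $L^{p+1}$ between $L^{p_c}$ and $L^{2N/(N-2s)}$ (the latter uniformly controlled by $\|W_n^M\|_{\dot H^s}$ through Sobolev embedding) produces $\|W_n^M\|_{p+1}\to 0$, which closes this step.

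It remains to expand $\|V_n^M\|_{p+1}^{p+1}$ into a sum of individual profile norms plus $o_n(1)$. Passing to a subsequence, for each $j$ either $t_n^j\to t^j\in\mathbb R$ or $|t_n^j|\to\infty$; by \eqref{divergence} the first alternative can hold for at most one index. If $t_n^j\to t^j$ is finite, then $U(-t_n^j)\psi^j\to U(-t^j)\psi^j$ strongly in $H^s$, hence in $L^{p+1}$. If $|t_n^j|\to\infty$, a Riemann--Lebesgue argument applied to $\widehat{U(-t_n^j)\psi^j}(\xi)=e^{it_n^j|\xi|^{2s}}\widehat{\psi^j}(\xi)$ shows $U(-t_n^j)\psi^j\rightharpoonup 0$ in $H^s$, and the compact radial Sobolev embedding $H^s_{\mathrm{rad}}\hookrightarrow L^{p+1}$ (valid since $p+1\in(2,\tfrac{2N}{N-2s})$) upgrades this to strong $L^{p+1}$ convergence to zero. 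Consequently all cross integrals $\int|U(-t_n^j)\psi^j|^a|U(-t_n^k)\psi^k|^b\,dx$ (with $j\neq k$, $a+b=p+1$, $a,b\geq 1$) vanish in the limit by H\"older, since at least one factor tends to zero in $L^{p+1}$. Iterating the three-term inequality in $M$ then yields $\|V_n^M\|_{p+1}^{p+1}=\sum_{j=1}^M\|U(-t_n^j)\psi^j\|_{p+1}^{p+1}+o_n(1)$, which combined with the second paragraph produces \eqref{epe}. The main obstacle is the last step: justifying that profiles escaping to $|t_n^j|=\infty$ decay in $L^{p+1}$, which in the fractional setting relies on the compact radial embedding rather than the pointwise dispersive kernel available when $s=1$; the remainder of the argument is linear-algebraic once this dispersion ingredient is in hand.
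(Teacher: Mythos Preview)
Your kinetic-part argument and your treatment of the profile--profile cross terms are correct and in line with the paper's approach. The gap is in how you handle the cross term between $V_n^M=\sum_{j=1}^M U(-t_n^j)\psi^j$ and $W_n^M$. The identity \eqref{epe} must hold for each \emph{fixed} $M$ with error $o_n(1)$ (this is exactly how it is used downstream, e.g.\ in Proposition~\ref{critical}). But \eqref{remainder''}, which you invoke, gives only the double limit $\lim_{M\to\infty}\lim_{n\to\infty}\|W_n^M\|_{p_c}=0$; for a fixed $M$ there is no reason $\|W_n^M\|_{p_c}$ (or, after your interpolation, $\|W_n^M\|_{p+1}$) tends to zero from that estimate alone. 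Hence your bound $C\|W_n^M\|_{p+1}$ on the cross term does not produce $o_n(1)$ for fixed $M$.

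The repair is to run the very case analysis you already use for the profiles, but at the level of the full decomposition. If every $|t_n^j|\to\infty$ for $1\le j\le M$, then each $U(-t_n^j)\psi^j\to 0$ in $L^{p+1}$ (exactly as you argue), so $V_n^M\to 0$ in $L^{p+1}$ and the cross term is controlled by $\|V_n^M\|_{p+1}\to 0$ rather than by $\|W_n^M\|_{p+1}$. If instead one index $j_0$ has $t_n^{j_0}$ bounded, one needs the additional orthogonality fact $U(t_n^{j_0})W_n^M\rightharpoonup 0$ in $H^s$ (standard in the construction of the profile decomposition, though not listed explicitly in Lemma~\ref{lpd}); since $t_n^{j_0}$ is bounded this yields $W_n^M\rightharpoonup 0$ in $H^s$, and the compact radial embedding $H^s_{\mathrm{rad}}\hookrightarrow L^{p+1}$ then forces $\|W_n^M\|_{p+1}\to 0$ for that fixed $M$. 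This is precisely the route the paper takes: it splits into these two cases and uses the weak-limit structure built into the profile extraction together with the compact radial embedding, rather than the Strichartz remainder bound \eqref{remainder''}, to kill the potential-energy cross terms at fixed $M$.
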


\begin{proposition}\label{critical}
(Existence of a critical solution)
There exists a global solution $u_c$ in $H^s$ with initial data $u_{c,0}$ such that
$\|u_{c,0}\|_2=1,$
$$E[u_c]=(ME)_c<M[Q]^{\frac{s-s_c}{s_c}}E[Q],\ \ \ \|D^s u_c\|_2^2<M[Q]^{\frac{s-s_c}{s_c}}\|D^s Q\|_2^2,\ \ for \ \ all\ \ 0\leq t<\infty,$$
and $$\| u_c\|_{S(\Lambda_{s_c})}=+\infty.$$
\end{proposition}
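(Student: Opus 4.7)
\medskip

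\noindent\textbf{Proof proposal for Proposition \ref{critical}.}
The plan is a standard concentration--compactness / Kenig--Merle extraction, adapted to the radial fractional setting through the profile decomposition of Lemma \ref{lpd}, the mass/energy Pythagorean expansions (Lemma \ref{energy expansion} and \eqref{hsexpansion}), the wave operator of Proposition \ref{wave operator}, and the long-time perturbation of Proposition \ref{properturb}. Start from the sequence $u_{n,0}$ introduced just before Lemma \ref{lpd}: $\|u_{n,0}\|_2=1$, $\|D^su_{n,0}\|_2<\|Q\|_2^{\frac{s-s_c}{s_c}}\|D^sQ\|_2$, $E[u_{n,0}]\downarrow (ME)_c$, and $SC(u_{n,0})$ fails for every $n$. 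Since $u_{n,0}$ is radial and, by Lemma \ref{lemcompare}, uniformly bounded in $H^s$, apply Lemma \ref{lpd} to obtain profiles $\psi^j$, shifts $t_n^j$, and remainders $W_n^M$ with
\[
u_{n,0}(x)=\sum_{j=1}^M U(-t_n^j)\psi^j(x)+W_n^M(x).
\]

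Next, combine \eqref{hsexpansion} with $\alpha=s$ and Lemma \ref{energy expansion} to get, in the limit $n\to\infty$ then $M\to\infty$,
\[
\sum_{j\geq 1}\bigl(\lim_{n}E[U(-t_n^j)\psi^j]\bigr)+\lim_M\lim_n E[W_n^M]=(ME)_c,
\]
together with the analogous identity for $\|D^s\cdot\|_2^2$. Using the lower bound from Lemma \ref{lemlowerbound} (applied to each profile, whose $\dot H^s\times L^2$ threshold is automatically subcritical by the expansion and the starting hypothesis on $u_{n,0}$), every nonzero profile contributes a strictly positive amount to both the mass-energy and the $L^2\dot H^s$ product. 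The crux is to show that \emph{at most one profile is nonzero}. Suppose for contradiction that two or more profiles are nonzero. Then for each such $j$ the nonlinear object associated to $\psi^j$ satisfies a mass-energy condition strictly below $(ME)_c$: if $t_n^j\to t^*$ finite, take the $H^s$-solution with data $U(-t^*)\psi^j$ and apply $SC$; if $|t_n^j|\to\infty$, invoke Proposition \ref{wave operator} to produce a solution $v^j$ scattering to $U(t)\psi^j$, still with mass-energy below $(ME)_c$, hence $SC(v^j_0)$ holds. In either case each nonlinear profile has finite $S(\Lambda_{s_c})$ norm.

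Form the approximant $\widetilde u_n(t,x)=\sum_{j=1}^M v^j(t-t_n^j,x)+U(t)W_n^M$. Because of \eqref{divergence} the cross terms in the nonlinear interactions vanish in the relevant Strichartz norms as $n\to\infty$, so the error $e_n:=i\partial_t\widetilde u_n-(-\Delta)^s\widetilde u_n-|\widetilde u_n|^{p-1}\widetilde u_n$ goes to zero in $S'(\Lambda_{-s_c})$; moreover $\|U(t)(u_{n,0}-\widetilde u_n(0))\|_{S(\Lambda_{s_c})}\to 0$ using \eqref{remainder}. The total $S(\Lambda_{s_c})$ norm of $\widetilde u_n$ is bounded uniformly in $n$ by the sum of the finite norms of the $v^j$ (asymptotic orthogonality) plus a term controlled by \eqref{remainder''}. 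Proposition \ref{properturb} then yields $\|u_n\|_{S(\Lambda_{s_c})}<\infty$ for large $n$, contradicting the choice of $u_{n,0}$. Therefore exactly one profile, say $\psi^1$, is nontrivial and $W_n^M\to 0$ in the relevant Strichartz and $H^s$ senses.

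Finally, pass to a subsequence so that either $t_n^1\to t^*\in\mathbb R$ or $t_n^1\to\pm\infty$. In the finite case set $u_{c,0}:=U(-t^*)\psi^1$; the Pythagorean identities and the sharpness of $(ME)_c$ force $\|u_{c,0}\|_2=1$, $E[u_{c,0}]=(ME)_c$, and $\|D^su_{c,0}\|_2^2<M[Q]^{\frac{s-s_c}{s_c}}\|D^sQ\|_2^2$. By Theorem \ref{th3.5} the solution $u_c$ is global in $H^s$; and if $\|u_c\|_{S(\Lambda_{s_c})}$ were finite, Proposition \ref{properturb} applied in reverse with $\widetilde u=u_c$ would again give $\|u_n\|_{S(\Lambda_{s_c})}<\infty$ for large $n$, a contradiction, so $\|u_c\|_{S(\Lambda_{s_c})}=+\infty$. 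In the case $|t_n^1|\to\infty$, Proposition \ref{wave operator} produces $v_0$ whose forward (or backward) evolution scatters to $U(t)\psi^1$; taking $u_{c,0}:=v_0$, one repeats the mass-energy computation and the same perturbation argument to conclude. The main obstacle is the orthogonality/decoupling step that makes $\widetilde u_n$ an approximate solution: one must verify in the $S'(\Lambda_{-s_c})$ norm that the pure nonlinear cross terms between distinct $v^j(\cdot-t_n^j,\cdot)$ and between these and $U(t)W_n^M$ vanish in the limit, which is where the divergence property \eqref{divergence} and the refined remainder bound \eqref{remainder''} are indispensable.
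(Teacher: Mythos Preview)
Your proposal is correct and follows essentially the same Kenig--Merle concentration-compactness route as the paper's (omitted) proof: profile decomposition, energy/mass Pythagorean expansions to show each profile is subthreshold, replacement of linear by nonlinear profiles via the wave operator, and long-time perturbation to rule out more than one nontrivial profile and to force $\|u_c\|_{S(\Lambda_{s_c})}=+\infty$. The only cosmetic differences are that you include $U(t)W_n^M$ in the approximant $\widetilde u_n$ (the paper does not, absorbing it instead into the initial mismatch for Proposition~\ref{properturb}) and that energy nonnegativity of the profiles comes from Lemma~\ref{lemcompare} rather than Lemma~\ref{lemlowerbound}; neither affects the argument.
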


\begin{proposition}\label{procompact}
(Precompactness of the flow of the critical solution)
Let $u_c$ be as  in Proposition \ref{critical}; then, if $\|u_c\|_{S([0,+\infty);\Lambda_{s_c})}=\infty$,
$$\{u_c(\cdot,t)| ~t\in[0,+\infty)\}\subset H^s$$
 is precompact in $H^s$.  A corresponding conclusion is reached if
 $\|u_c\|_{S((-\infty,0];\Lambda_{s_c})}=\infty$.

\end{proposition}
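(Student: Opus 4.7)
The plan is to proceed by contradiction and adapt the profile-decomposition scheme already used in the proof of Proposition \ref{critical}. Let $\{t_n\} \subset [0,+\infty)$ be an arbitrary sequence. Since $u_c$ is global and uniformly bounded in $H^s$ (by the invariance of $K_g$ and Lemma \ref{lemcompare}), the sequence $\phi_n := u_c(\cdot,t_n)$ is radial and uniformly bounded in $H^s$. We aim to extract a subsequence converging in $H^s$, and since $H^s$ is metrizable this is equivalent to precompactness.

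First I would apply Lemma \ref{lpd} to $\phi_n$, obtaining profiles $\{\psi^j\}$, time shifts $\{t_n^j\}$, and remainders $W_n^M$ so that $\phi_n = \sum_{j=1}^M U(-t_n^j)\psi^j + W_n^M$, with the Pythagorean expansions \eqref{hsexpansion} for $\alpha=0,s_c,s$ and the energy expansion of Lemma \ref{energy expansion}. Because $M[u_c] = 1$ and $M[u_c]^{(s-s_c)/s_c}E[u_c] = (ME)_c$, the expansions yield
\begin{equation*}
\sum_{j=1}^{M}\lim_{n\to\infty}E[U(-t_n^j)\psi^j] + \lim_{n\to\infty}E[W_n^M] = (ME)_c,
\qquad \sum_{j=1}^M\|\psi^j\|_2^2 + \lim_{n\to\infty}\|W_n^M\|_2^2 \leq 1,
\end{equation*}
and the analogous relation for $\|D^s\cdot\|_2^2$. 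The nonnegativity of each limiting energy (a consequence of Lemma \ref{lemcompare} applied to each profile, after passing to the FNLS profile via the wave operator of Proposition \ref{wave operator} when $|t_n^j|\to\infty$, exactly as in the proof of Proposition \ref{critical}) then forces that only one profile, say $\psi^1$, is nontrivial; otherwise each individual nonlinear profile would have mass-energy strictly below $(ME)_c$ and hence scatter, and the nonlinear superposition argument based on Proposition \ref{properturb} would give $\|u_c\|_{S(\Lambda_{s_c})}<\infty$, contradicting Proposition \ref{critical}.

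Next I would rule out $|t_n^1|\to\infty$. If $t_n^1 \to +\infty$, the dispersive decay $\|U(-t_n^1)\psi^1\|_{p+1}\to 0$ (used in the energy expansion lemma) combined with the wave operator of Proposition \ref{wave operator} gives a solution $\tilde v$ of \eqref{eq1} with $\|\tilde v(-t_n^1) - U(-t_n^1)\psi^1\|_{H^s}\to 0$ and $\|\tilde v\|_{S(\Lambda_{s_c};(-\infty,0])}<\infty$; then $U(t)\phi_n$ is close in $S(\Lambda_{s_c};(-\infty,0])$ to $U(t)\tilde v(-t_n^1)$, so Proposition \ref{properturb} applied to $u_c(\cdot + t_n)$ produces $\|u_c\|_{S([t_n,\infty);\Lambda_{s_c})}<\infty$ for large $n$, again contradicting non-scattering (similarly if $t_n^1\to-\infty$, using $\|u_c\|_{S([0,t_n];\Lambda_{s_c})}\to\infty$). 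Hence, after passing to a subsequence, $t_n^1 \to t^\ast \in \mathbb R$, and by continuity of the linear flow we may absorb $t^\ast$ into $\psi^1$ and assume $t_n^1 = 0$.

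Finally I would show $W_n^M \to 0$ in $H^s$. Once there is a single profile with bounded time shift, the $H^s$ Pythagorean expansion gives $\|W_n^M\|_{H^s}^2 = \|\phi_n\|_{H^s}^2 - \|\psi^1\|_{H^s}^2 + o_n(1)$, so it suffices to show $\|\phi_n\|_{H^s} \to \|\psi^1\|_{H^s}$. This follows from the equalities in mass ($=1$), in $\dot H^s$ (since both the energy expansion and the Gagliardo--Nirenberg argument force the $L^{p+1}$ norm of the remainder to vanish, using \eqref{remainder''} together with interpolation against the uniform $H^s$ bound), and hence in $H^s$ as well; the strict inequalities in Proposition \ref{critical} keep us away from the $Q$-threshold so that Lemma \ref{lemcompare} and \eqref{4.7} remain applicable to $\psi^1$. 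The main technical obstacle is precisely the asymptotic vanishing of $\|W_n^M\|_{p+1}$, which requires upgrading the $S(\Lambda_{s_c})$-smallness of $U(t)W_n^M$ to smallness in the nonlinear term, via \eqref{remainder''} and Sobolev interpolation. Once this is established, $\phi_n \to \psi^1$ strongly in $H^s$, which is the desired precompactness; the argument on $(-\infty,0]$ is symmetric.
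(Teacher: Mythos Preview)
Your outline follows the same strategy as the paper's (omitted, but present in the source) argument: apply the profile decomposition to $\phi_n=u_c(t_n)$, reduce to a single profile $\psi^1$ via criticality and Proposition~\ref{properturb}, rule out $|t_n^1|\to\infty$, and show $W_n^M\to0$ in $H^s$. Two points deserve correction.

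\textbf{Time direction in the $t_n^1\to+\infty$ case.} With $t_n^1\to+\infty$ the backward wave operator produces $\tilde v$ with $\|\tilde v\|_{S(\Lambda_{s_c};(-\infty,0])}<\infty$; comparing $u_c(\cdot+t_n)$ to $\tilde v(\cdot-t_n^1)$ via Proposition~\ref{properturb} then controls $u_c$ on intervals of the form $(-\infty,t_n+t_n^1]$ (or $[t_n,t_n+t_n^1]$), \emph{not} on $[t_n,\infty)$ as you wrote. The contradiction comes from the uniform bound $\|u_c\|_{S(\Lambda_{s_c};(-\infty,t_n])}\le c(A)$ together with $t_n\to+\infty$ (which you may assume after discarding bounded subsequences), yielding $\|u_c\|_{S(\Lambda_{s_c};\mathbb R)}<\infty$. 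The paper handles this more simply by applying the small-data theory (Proposition~\ref{sd}) directly to $U(t)u_c(t_n)$ on the appropriate half-line, bypassing the wave operator and perturbation entirely.

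\textbf{Vanishing of the remainder.} Your route through \eqref{remainder''} and interpolation works but is heavier than needed. Once the single-profile reduction forces $M[\psi^1]=1$ and $\lim_nE[U(-t_n^1)\psi^1]=(ME)_c$ (otherwise the same perturbation argument as in Proposition~\ref{critical} gives scattering), the mass and energy Pythagorean expansions immediately give $M[W_n^M]\to0$ and $E[W_n^M]\to0$. Gagliardo--Nirenberg then yields $\|W_n^M\|_{p+1}\to0$, whence $\|D^sW_n^M\|_2^2=2E[W_n^M]+\tfrac{2}{p+1}\|W_n^M\|_{p+1}^{p+1}\to0$, and $\|W_n^M\|_{H^s}\to0$ follows. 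This is the paper's argument, and it avoids the ``main technical obstacle'' you flag.
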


\begin{corollary}\label{compact-localization}
Let $u=u(t)$ be a solution to \eqref{eq1} such that $\mathcal{K}^+=\{u(\cdot,t)| ~t\in[0,+\infty)\}$
is precompact in $H_r^s$. Then, for each $\epsilon>0,$
there exists $R>0$ such that
$$\int_{|x|>R}|D^s u(x,t)|^2+|u(x,t)|^2+|u(x,t)|^{p+1}dx\leq\epsilon.$$
\end{corollary}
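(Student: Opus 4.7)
The plan is to derive the uniform tail bound from precompactness via a standard finite-net argument, exploiting that each of the three contributions to the tail integral is continuous in $v$ under the $H^s$-topology when restricted to the bounded set $\mathcal{K}^+$.

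First I would fix $\epsilon>0$ and use precompactness of $\mathcal{K}^+$ in $H^s$ to find a finite $\delta$-net: times $0\le t_1,\ldots,t_n<\infty$ such that for every $t\in[0,\infty)$ there is some $t_i$ with $\|u(t)-u(t_i)\|_{H^s}<\delta$, where $\delta>0$ is to be chosen below. Let $B:=\sup_{t\ge 0}\|u(t)\|_{H^s}<\infty$, which is finite by precompactness.

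Next, for each fixed $i$, since $u(t_i)\in H^s$ and hence $|D^s u(t_i)|^2,\,|u(t_i)|^2,\,|u(t_i)|^{p+1}\in L^1(\mathbb{R}^N)$ (the last by the Sobolev embedding $H^s\hookrightarrow L^{p+1}$ valid in our range of $p$), dominated convergence produces $R_i$ with
\begin{equation*}
\int_{|x|>R_i}\bigl(|D^s u(t_i)|^2+|u(t_i)|^2+|u(t_i)|^{p+1}\bigr)dx<\tfrac{\epsilon}{2}.
\end{equation*}
Set $R:=\max_{1\le i\le n}R_i$.

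The remaining step is a continuity/triangle-inequality argument to transfer this bound from the net to every $u(t)$. Writing $F_R(v):=\int_{|x|>R}(|D^s v|^2+|v|^2+|v|^{p+1})dx$ and using
\begin{align*}
\bigl|\|D^s v\|_{L^2(|x|>R)}^2-\|D^s w\|_{L^2(|x|>R)}^2\bigr|
&\le \bigl(\|v\|_{\dot H^s}+\|w\|_{\dot H^s}\bigr)\|v-w\|_{\dot H^s},\\
\bigl|\|v\|_{L^2(|x|>R)}^2-\|w\|_{L^2(|x|>R)}^2\bigr|
&\le \bigl(\|v\|_{2}+\|w\|_{2}\bigr)\|v-w\|_{2},
\end{align*}
together with the analogous estimate for the $L^{p+1}$ term via Sobolev embedding and Hölder, one gets
\begin{equation*}
|F_R(u(t))-F_R(u(t_i))|\le C(B)\,\|u(t)-u(t_i)\|_{H^s}\le C(B)\delta.
\end{equation*}
Choosing $\delta$ so that $C(B)\delta<\epsilon/2$ and combining with the previous step yields $F_R(u(t))<\epsilon$ for all $t\ge 0$, which is the desired conclusion.

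The only mildly delicate point is controlling the nonlocal piece $\int_{|x|>R}|D^s u(t)|^2\,dx$ under $H^s$-perturbations; the bound above bypasses any cutoff/commutator estimate by observing that the relevant map is the restriction of the $\dot H^s$-norm squared to a fixed set, which is automatically Lipschitz on bounded subsets of $\dot H^s$. Apart from this, the argument is the standard equicontinuity-from-precompactness mechanism.
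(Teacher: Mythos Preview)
Your proof is correct and follows essentially the same approach as the paper: both extract the uniform tail bound from precompactness via the continuity of the tail functional $v\mapsto\int_{|x|>R}(|D^sv|^2+|v|^2+|v|^{p+1})\,dx$ on bounded subsets of $H^s$. The only cosmetic difference is that you argue directly with a finite $\delta$-net while the paper argues by contradiction through a convergent subsequence; the underlying mechanism is identical.
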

\begin{proof}
If not, for any $R>0$, there exists $\epsilon_0>0$ and a sequence $t_n$ such that
$$\int_{|x|>R}|D^s u(x,t_n)|^2+|u(x,t_n)|^2+|u(x,t_n)|^{p+1}dx\geq\epsilon_0.$$
By the precompactness of $\mathcal{K}^+$, there exists $\phi\in H^s$ such that, up to a subsequence of $t_n$,
we have $u(\cdot,t_n)\rightarrow\phi$ in $H^s$. Thus, for any $R>0$, we obtain
$$\int_{|x|>R}|D^s \phi(x)|^2+|\phi(x)|^2+|\phi(x)|^{p+1}dx\geq\epsilon_0,$$
from which we can easily obtain a contradiction because  $\phi\in H^s$ and
$\|\phi\|_{p+1}^{p+1}\leq c\|\phi\|^{p+1}_{H^s}$ by the Sobolev inequality.
\end{proof}

\section{Rigidity theorem }

In order to finish the proof of Theorem 1.1, we need  the following rigidity theorem.

\begin{theorem}\label{rigidity}  Let $N\geq 2$ and  $1+\frac{4s}{N}<p<1+\frac{4s}{N-2s}$.
Assume that the initial data $u_0\in H^s$ is radial and   $u_0\in K_g$, i.e.,
\begin{equation}\label{6.1}
M[u_{0}]^{\frac{s-s_c}{s_c}}E[u_{0}]<M[Q]^{\frac{s-s_c}{s_c}}E[Q],
\end{equation}
 and
\begin{equation}\label{6.2}
M[u_{0}]^{\frac{s-s_c}{s_c}}\|u_{0}\|^2_{\dot{H}^s}<M[Q]^{\frac{s-s_c}{s_c}}\| Q\|^2_{\dot{H}^s}.
\end{equation}
Let $u=u(t)$ be the riadially global solution of \eqref{eq1} with initial data $u_0$.  If it holds that
$\mathcal{K}^+=\{u(\cdot,t)| ~t\in[0,+\infty)\}$ is precompact in $H^s$,
then $u_0=0$.
The same conclusion holds if
$\mathcal{K}^-=\{u(\cdot,t):t\in(-\infty,0]\}$ is precompact in $H^s$.
\end{theorem}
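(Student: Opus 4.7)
The plan is to execute a Kenig-Merle style rigidity argument driven by a localized virial identity of Boulenger-Himmelsbach-Lenzmann type. It suffices to treat the forward orbit $\mathcal{K}^+$, since the argument for $\mathcal{K}^-$ is identical after time reversal. We argue by contradiction, supposing $u_0\neq 0$. Conservation of mass yields $\|u(t)\|_2=\|u_0\|_2>0$ for all $t\geq 0$, and combining Lemma \ref{lemcompare} with conservation of energy gives $E[u_0]>0$ together with the uniform lower bound $\|D^s u(t)\|_2^2\geq 2E[u_0]>0$. Since $u_0\in K_g$, Remark \ref{remdelta} produces $\delta>0$ with $M[u_0]^{(s-s_c)/s_c}E[u_0]<(1-\delta)M[Q]^{(s-s_c)/s_c}E[Q]$, and Lemma \ref{lemlowerbound} then furnishes the uniform coercivity
$$K(u(t)):=\|D^s u(t)\|_2^2-\frac{N(p-1)}{2s(p+1)}\|u(t)\|_{p+1}^{p+1}\geq C_\delta\|D^s u(t)\|_2^2\geq c_0>0\qquad\text{for all }t\geq 0,$$
with $c_0:=2C_\delta E[u_0]$ independent of time.

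Next I would fix a smooth radial cutoff $\phi_R(x)=R^2\phi(|x|/R)$ with $\phi(r)=r^2/2$ for $r\leq 1$, $\phi$ globally bounded, and every derivative $\phi^{(k)}$ uniformly bounded. Following \cite{b-h-ljfa16} I define the localized virial
$$V_R(t):=2\,\mathrm{Im}\int\bar{u}(t,x)\,\nabla\phi_R(x)\cdot\nabla u(t,x)\,dx,$$
interpreted, when $s<1$, via the Caffarelli-Silvestre extension used in \cite{b-h-ljfa16} to make sense of the commutator $[(-\Delta)^s,\phi_R]$. The virial identity established in that reference yields
$$\frac{d}{dt}V_R(t)=8s\,K(u(t))+\mathcal{E}_R(t),$$
where the remainder $\mathcal{E}_R(t)$ is supported on $\{|x|\geq R\}$ and admits a bound of the form
$$|\mathcal{E}_R(t)|\leq C\int_{|x|\geq R}\bigl(|D^s u(t,x)|^2+|u(t,x)|^{p+1}\bigr)dx+o_R(1)$$
uniformly in $t$, with the $o_R(1)$ term arising from higher derivatives of $\phi_R$ paired against the radial Sobolev-Strauss decay of $u(t,\cdot)$ outside $B_R$.

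To conclude, I would combine the coercivity with the precompactness of $\mathcal{K}^+$. Corollary \ref{compact-localization} provides, for each $\eta>0$, some $R=R(\eta)$ with $\sup_{t\geq 0}\int_{|x|\geq R}(|D^s u|^2+|u|^{p+1})\,dx<\eta$. Choosing $\eta$ small enough that $C\eta+o_R(1)<4sc_0$ gives $\frac{d}{dt}V_R(t)\geq 4sc_0$ for every $t\geq 0$, hence $V_R(T)-V_R(0)\geq 4sc_0 T$ for all $T>0$. On the other hand, for this fixed $R$, Cauchy-Schwarz and the uniform $H^s$ bound on $\mathcal{K}^+$ yield $\sup_{t\geq 0}|V_R(t)|\leq C(R)\sup_{t\geq 0}\|u(t)\|_{H^s}^2<\infty$. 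Sending $T\to\infty$ produces the desired contradiction, forcing $u_0=0$.

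The principal obstacle is the rigorous derivation of the localized virial identity with explicit, controllable error terms for the nonlocal operator $(-\Delta)^s$: the commutator $[(-\Delta)^s,\phi_R]$ is not amenable to a direct Fourier computation, and I would follow Boulenger-Himmelsbach-Lenzmann by lifting $u$ to its degenerate elliptic Caffarelli-Silvestre extension on $\mathbb{R}^N\times(0,\infty)$, where integration by parts becomes transparent and the error can be tracked in terms of derivatives of $\phi_R$. The radiality hypothesis is essential at this stage, both to invoke the Strauss-type embedding that converts $H^s$-precompactness into pointwise tail decay and to bound the extension-level error terms by the quantity on the right-hand side of the displayed estimate for $\mathcal{E}_R$.
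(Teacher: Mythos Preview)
Your proposal is correct and follows the same rigidity scheme as the paper: a localized virial of Boulenger--Himmelsbach--Lenzmann type, coercivity of $K(u(t))$ from Lemma~\ref{lemlowerbound}, tail control from precompactness, and a linear-growth-versus-boundedness contradiction. The only technical discrepancy is in how the nonlocal commutator is handled. You propose the Caffarelli--Silvestre harmonic extension, but \cite{b-h-ljfa16} and the paper instead use the Balakrishnan representation $u_m=c_s(-\Delta+m)^{-1}u$ with $\int_0^\infty m^s\|\nabla u_m\|_2^2\,dm=s\|D^s u\|_2^2$; accordingly the error terms are written as $\int_0^\infty m^s\int_{|x|>R}|\nabla u_m|^2\,dx\,dm$ and $R^{-2s}\|u\|_2^2$, and the paper invokes a variant of Corollary~\ref{compact-localization} stated in those variables. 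This is a difference of machinery rather than of strategy, and either route closes the argument.
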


Now, we introduce the localized virial estimate for the radial solutions of \eqref{eq1} in terms of
the idea in \cite{b-h-ljfa16}.
Let  $u\in H^s$ with $s\geq\frac12$. we define the auxiliary function $u_m=u_m(t,x)$ as
\begin{align}\label{um}
u_m:=c_s\frac1{-\Delta+m}u(t)=c_s\mathcal F^{-1}\frac{\widehat u(t,\xi)}{|\xi|^2+m}
\end{align}
where  $c_s=\sqrt{\frac{sin\pi s}{\pi}}$.
It follows from  \cite{b-h-ljfa16} that, for any  $u\in H^s$,
\begin{align}\label{snorm}
\int_0^\infty m^s\int_{\mathbb R^N}|\nabla u_m|^2dxdm=s\|(-\Delta)^{\frac s2}u\|_2^2.
\end{align}
Then, we obtain the following corollary, which is a counterpart of Corollary \ref{compact-localization}.
\begin{corollary}\label{corlocal}
Let $u=u(t)$ be a solution to \eqref{eq1} such that $K=\{u(\cdot,t)| ~t\in[0,+\infty)\}$
is precompact in $H_r^s$. Then, for each $\epsilon>0,$
there exists $R>0$ such that
$$\int_0^\infty m^s\int_{|x|>R}|\nabla u_m|^2dxdm+\int_{|x|>R}|u(x,t)|^2+|u(x,t)|^{p+1}dx\leq\epsilon.$$
\end{corollary}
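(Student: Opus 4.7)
Since Corollary~\ref{compact-localization} already handles $\int_{|x|>R}(|u|^2+|u|^{p+1})\,dx$, the plan is to prove
\[
\mathcal{I}_R(u(t)) := \int_0^\infty m^s\int_{|x|>R}|\nabla u_m(t,x)|^2\,dx\,dm \;\longrightarrow\; 0 \ \text{ as }\ R\to\infty,
\]
uniformly in $t\in[0,\infty)$. The strategy is to combine the identity \eqref{snorm}, which gives a finite total double integral, with a standard precompactness argument in the spirit of Corollary~\ref{compact-localization}. Since the quantity $\mathcal{I}_R$ is linear in the flow only through a quadratic form in $\phi$, both ingredients one needs (pointwise-in-$\phi$ tail decay and continuity of $\mathcal{I}_R$ on $H^s$) follow directly from \eqref{snorm}.

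First, for each fixed $\phi\in H^s$, $\mathcal{I}_R(\phi)\to 0$ as $R\to\infty$: by Tonelli and \eqref{snorm} the function $x\mapsto \int_0^\infty m^s|\nabla\phi_m(x)|^2\,dm$ belongs to $L^1(\mathbb{R}^N)$ with integral equal to $s\|D^s\phi\|_2^2$, and the absolute continuity of the Lebesgue integral yields the tail decay. Second, $\mathcal{I}_R$ is continuous on $H^s$ uniformly in $R$: the map $\phi\mapsto \mathbf{1}_{|x|>R}\,\nabla\phi_m$ is linear into $L^2(m^s\,dm\otimes dx)$ with operator norm at most $\sqrt{s}\,\|\phi\|_{\dot{H}^s}$ by \eqref{snorm}, so $\sqrt{\mathcal{I}_R(\cdot)}$ is a seminorm dominated by $\sqrt{s}\,\|\cdot\|_{\dot{H}^s}$, yielding
\[
|\mathcal{I}_R(\phi)-\mathcal{I}_R(\psi)|\le s\bigl(\|\phi\|_{\dot{H}^s}+\|\psi\|_{\dot{H}^s}\bigr)\|\phi-\psi\|_{\dot{H}^s}.
\]

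Given $\varepsilon>0$, I would then cover the precompact set $\mathcal{K}^+$ by finitely many balls $B_{H^s}(\phi_j,\delta)$, with $\delta$ chosen small compared to $\varepsilon$ and the $H^s$-diameter of $\mathcal{K}^+$, choose $R$ so large that $\mathcal{I}_R(\phi_j)<\varepsilon/2$ for every $j$ (using the pointwise decay above), and invoke the continuity estimate to conclude $\mathcal{I}_R(u(t))<\varepsilon$ for all $t\ge 0$. Combining with Corollary~\ref{compact-localization} by taking the maximum of the two resulting radii completes the proof. The only mildly delicate point is the uniform-in-$R$ continuity of $\mathcal{I}_R$ on $H^s$, but this is essentially automatic from \eqref{snorm}; consequently the argument carries no real analytic obstacle beyond the bookkeeping of the precompactness reduction.
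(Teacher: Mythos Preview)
Your argument is correct. The paper does not actually give a proof of this corollary; it merely states it as ``a counterpart of Corollary~\ref{compact-localization}'' and moves on, so there is no detailed proof to compare against. Your write-up supplies precisely the details one would expect: the identity \eqref{snorm} shows that $\phi\mapsto m^{s/2}\nabla\phi_m$ is (up to a constant) a linear isometry from $\dot H^s$ into $L^2(dm\otimes dx)$, whence $\sqrt{\mathcal I_R(\cdot)}$ is a seminorm dominated by $\sqrt{s}\,\|\cdot\|_{\dot H^s}$, giving both the pointwise tail decay and the uniform-in-$R$ continuity you need.

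One minor stylistic remark: the paper's proof of the analogous Corollary~\ref{compact-localization} proceeds by contradiction (extract a convergent subsequence along times $t_n$ and derive a contradiction with $\phi\in H^s$), whereas you use the equivalent direct finite-$\delta$-net covering argument. Both are standard for turning precompactness into uniform tail smallness; your route has the slight advantage of making the dependence on the modulus of continuity explicit, which is helpful here since the functional $\mathcal I_R$ is not a priori an $H^s$-tail but only dominated by one through \eqref{snorm}.
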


{\bf The Proof of Theorem \ref{rigidity}.}
It suffices to address the $\mathcal{K}^+$ case, since the $\mathcal{K}^-$ case follows similarly.

Let $\varphi\in C_c^\infty$ be a radially real-valued function, and defined by
$$\varphi(x)=\left\{\begin{aligned}
&|x|^2 \ \ &{\rm for} \ \  |x|\leq1\\
&0\ \ &{\rm for} \ \ |x|\geq2.\end{aligned}\right.$$
For any $R>0$,  take $\varphi_R(x)=R^2\varphi(\frac xR)$
and define the localized virial of $u\in H^s$  by
\begin{align*}
\mathcal J_{R}(t):=2Im\int_{\mathbb R^N}\bar u(t,x)\nabla \varphi_R( x)\cdot\nabla u(t,x)dx.
\end{align*}
Similar to  the method  in \cite{b-h-ljfa16}, we obtain  the identity
\begin{align*}
\mathcal J'_{R}(t))&=\int_0^\infty m^s\int_{\mathbb R^N}
\left(4\overline{\partial_k u_m}(\partial^2_{kl}\varphi(\frac xR))\partial_lu_m-(\Delta^2\varphi_R(x))|u_m|^2
\right)dxdm\\
&\qquad \qquad -\frac{2(p-1)}{p+1}\int_{\mathbb R^N}(\Delta\varphi)(\frac xR)|u|^{p+1}dx.
\end{align*}
By  the properties  of $\varphi$,  we deduce that
\begin{align}\label{MR}
\mathcal J'_R(t)\geq&8\int_0^\infty m^s\int_{|x|\leq R}|\nabla u_m|^2dx+
4\int_0^\infty m^s\int_{R<|x|<2R}\partial^2_r\varphi\left(\frac{x}{R}\right)|\nabla u_m|^2dxdm
\\ \nonumber
&-\int_0^\infty m^s\int_{|x|>R}\Delta^2\varphi_R\left(x\right)|u_m|^2dxdm\\ \nonumber
&-\frac{2(p-1)}{p+1}\int_{|x|\leq R}|u|^{p+1}dx-c\int_{R<|x|<2R}|u|^{p+1}dx\\ \nonumber
\geq&\left(8\int_0^\infty m^s\int_{\mathbb R^N}|\nabla u_m|^2dx-\frac{4N(p-1)}{P+1}\int|u|^{p+1}dx\right)\\ \nonumber &-
\left(8\int_0^\infty m^s\int_{|x|>R}|\nabla u_m|^2dx-\frac{4N(p-1)}{P+1}\int_{|x|>R}|u|^{p+1}dx\right)\\ \nonumber
&-c\left(\int_0^\infty m^s\int_{R<|x|<2R}|\nabla u_m|^2dx+\int_{R<|x|<2R}|u|^{p+1}dx\right)-\frac c{R^{2s}}\|u\|_2^2.
\end{align}
Here, we use the following estimate in the last step(see \cite{b-h-ljfa16}),
\begin{align*}
\int_0^\infty m^s\int_{|x|>R}\Delta^2\varphi_R\left(x\right)|u_m|^2dxdm\leq cR^{-2s}\|u\|_2^2.
\end{align*}
Now, let $\delta\in(0,1)$ satisfy
$E[u_0]<(1-\delta)E[Q]M[Q]^{\frac{s-s_c}{s_c}}$.
From
Lemma \ref{lemlowerbound} and Lemma \ref{lemcompare}, we see that
\begin{align*}
8\int_0^\infty m^s\int_{\mathbb R^N}|\nabla u_m|^2dx-\frac{4N(p-1)}{P+1}\int|u|^{p+1}dx&=8s\gamma\|D^s u\|_2^2-\frac{4N(p-1)}{P+1}\int|u|^{p+1}dx\\
&\geq C_\delta\|D^s u_0\|_2^2.
\end{align*}
Then, we can take  $R$ large enough to obtain the following estimate
\begin{align}\label{lowerM'}
\mathcal J'_R(t)\geq&C\|D^s u_0\|_2^2.
\end{align}
Integrate \eqref{lowerM'} over $[0,t]$.
$$|\mathcal J_R(t)-\mathcal J_R(0)|\geq C t\|D^s u_0\|_2^2$$
However, by \cite{b-h-ljfa16}, we should
have
$$|\mathcal J_R(t)-\mathcal J_R(0)|\leq C_R(\|u\|^2_{H^{\frac12}}+\|u_0\|^2_{H^{\frac12}})
\leq C_R(\|u\|^2_{H^{s}}+\|u_0\|^2_{H^{s}})\leq C_R\|Q\|^2_{H^{s}},$$
which is a contradiction for large $t$ unless $u_0=0$.
\ \  \ \ \ \ \ \ \ \ \ \ \  \ \ \ \ \ \ \ \ \ \ \  \ \ \ \ \ \ \ \ \ \ \  \ \ \ \ \ \ \ \ \ \ \  \ \ \ \ \ \ \ \ \ \ \ $\Box$
\vspace{0.5cm}

Now, we can finish the proof of Theorem 1.1.\\
{\bf The  Proof of Theorem 1.1.}

Note that by Proposition \ref{procompact},
the critical solution $u_c$ constructed in Section 4 satisfies the hypotheses in Theorem \ref{rigidity}.
Therefore,
to complete the proof of Theorem \ref{th1}, we should apply Theorem \ref{rigidity} to
$u_c$ and find that $u_{c,0}=0$, which contradicts the fact that $\|u_c\|_{S(\Lambda_{s_c})}=+\infty.$
This contradiction  shows that $SC(u_0)$ holds. Thus, by Proposition \ref{h1scattering},  we have shown that $H^s$ scattering holds.
\ \  \ \ \ \ \ \ \ \ \ \ \  \ \ \ \ \ \ \ \ \ \ \  \ \ \ \ \ \ \ \ \ \ \  \ \ \ \ \ \ \ \ \ \ \  \ \ \ \ \ \ \ \ \ \ \ $\Box$

\vspace{0.5cm}

{\bf Acknowledgments }

This work  was  supported by the National Natural Science Foundation of China (No. 11501395,  11301564, and 11371267) and  the Excellent Youth Foundation of Sichuan  Scientific Committee grant No. 2014JQ0039  in China.

\vspace{0.5cm}



\begin{thebibliography}{99}




\bibitem{BaoDong2011} W.Z. Bao, and X.C. Dong, Numerical methods for computing ground states and dynamics
of nonlinear relativistic Hartree equation for boson stars, J. Comput. Phys.,
230 (2011),   5449-5469.

\bibitem{b-h-ljfa16}   T. Boulenger, D. Himmelsbach,  and E. Lenzmann, Blowup for fractional NLS, J. Funct. Anal., 271, (2016), 2569-2603.
\bibitem{Caffarelli-Silvestre2007} L. Caffarelli, and L. Silvestre, An extension problem related to the fractional Laplacian, Commun. Partial Differental Equations, 32 (2007), 1245-1260.



 \bibitem{Cazenave2003} T. Cazenave,  Semilinear Schr\"{o}dinger equations. Courant Lecture Notes in Mathematics,10, NYU, CIMS, AMS (2003).

\bibitem{Chen-Li-Li2017} W. Chen, C. Li, and Y. Li, A direct method of moving planes for the fractional Laplacian, Adv. Math., 308(21)  (2017), 204-437.



 \bibitem{ChoHwangKwonLee2012} Y. Cho, G. Hwang,  S. Kwon,  and S.  Lee,  On the finite time blowup for
mass-critical Hartree equations, P. Roy. Sco. Edingb. A, 145(3) (2015), 467-479.


 \bibitem{ChoHwangHajaiejOzawa2012}  Y. Cho,  G. Hwang,  H. Hajaiej,  and T. Ozawa,
  On the Cauchy problem of fractional Schr\"{o}dinger equation with Hartree type nonlinearity, Funkcialaj Ekvacioj,  56 (2012), 193-224.



\bibitem{cho-hwang-kwon-leedcds15} Y. Cho, G. Hwang, S. Kwon and S. Lee, Well-posedness and ill posedness for the cubic fractional Schr\"odinger
equations, Discrete and Continuous Dynamical Systems, 35 (2015), 2863-2880.

\bibitem{dodson-murphy} B. Dodson, and J. Murphy, A new proof of scattering below the ground state for the 3D radial focusing NLS, arXiv:1611.04195.

\bibitem{ElgartSchlein2007}  A. Elgart, and B. Schlein,   Mean field dynamics of boson stars, Commun. Pure Appl. Math.,  60  (2007), 500-545.

\bibitem{Felmer-Quaas-Tan2012} P. Felmer, A. Quaas  and J. Tan, Positive solutions of the nonlinear Schr\"{o}dinger equation with  the fractional Laplacian, P. Roy. Scoc. Edinb. A, 142 (2012), 1237-1262.

\bibitem{Frank-Lenzmann2013} R. Frank, and E. Lenzmann, Uniqueness of nonlinear ground states for fractional Laplacian in $R$, Acta Math., 210 (2013), 261-318.

\bibitem{FrohlichLenzmann2007} J. Fr\"{o}hlich and E. Lenzmann, Blowup for nonlinear wave equations describing boson stars, Commun. Pure Appl. Math.,
  60 (2007), 1691-1705.


 \bibitem{Guohuo2011} B.L.  Guo and   Z.H. Huo,     Global \ well-posedness \ \ for\ \ the\ \ fractional\ \
 nonlinear \ Schr\"{o}dinger equation, Comm. Part. Diff. Eqns., 36(2011), 247-255.

\bibitem{qgcpde16} Q. Guo, Scattering for the focusing $L^2$-supercritical and $\dot{H}^2$-subcritical biharmonic NLS equations, Commun. Partial Differential Equations, 41(2) (2016), 185-207.

\bibitem{g-h17na}
Q. Guo and X. He, Semiclassical states for fractional Schr\"odinger equations with critical growth,  Nonlinear Anal.,151 (2017),164-186.

\bibitem{g-h17jde} Q. Guo and X. He, Semiclassical states for weakly coupled fractional Schr\"odinger systems, J. Differential Equations, 263(4)(2017),1986¨C2023.

 

\bibitem{g-zarxiv1} Q. Guo and  S. H. Zhu, Sharp threshold of blow-up and
  scattering for the fractional Hartree equation, preprint. arXiv: 1705.08615.

 \bibitem{G-W} Z. Guo and Y. Wang, Improved Strichartz estimates for a class of dispersive equations in the
radial case and their applications to nonlinear Schr\"{o}dinger and wave equations,  J. Anal. Math., 124(1) (2014), 1-38.

\bibitem{guo-sire-wang-zhaoarxiv1310.6816}
Z. Guo£¬ Y. Sire,  Y. Wang and L. Zhao, On the energy-critical fractional Schr\"odinger equation in the radial case, arXiv:1310.6816.

 

  \bibitem{Hajaiej2013} H. Hajaiej, Existence of minimizers of functionals involving the fractional gradient in the absence of compactness, symmetry and monotonicity, J. Math. Anal. Appl., 399 (2013), 17-26.

\bibitem{hong-sirecpaa15} Y. Hong and  Y.Sire, On fractional Schr\"odinger equations in Sobolev spaces, Commun. Pure  Appl. Anal., 14(2015),
2265-2282.

\bibitem{radial}
{J. Holmer and S. Roudenko,}  A sharp condition for scattering of the
radial 3d cubic nonlinear Schr\"{o}dinger equation, Commun. Math.
Phys. 282(2) (2008), pp. 435-467.

\bibitem{Ionescu-Pusateri2014} A. Ionescu and F. Pusateri, Nonlinear fractional Schr\"{o}dinger equations in one dimension, J. Funct. Anal., 266 (2014), 139-176.





\bibitem{kato1995}
{T. Kato,}  On nonlinear Schr\"{o}dinger equations II, $H^s$-solutions and unconditional well-posedness, J. Anal. Math., 67(1995), 281-306.


\bibitem{km}
{C. E. Kenig and F. Merle,} Global well-posedness, scattering, and
blow-up for the energy-critical focusing nonlinear Schr\"{o}dinger
equation in the radial case, Invent. Math., 166(3)  (2006),
645-675.



 \bibitem{keraani2001} S. Keraani, On the Defect of Compactness for the Strichartz Estimates of the Schr\"{o}dinger Equations, J. Funct. Anal., 175 (2001), 353-392.



\bibitem{Laskin2000} N. Laskin,   Fractional quantum mechanics and L\'{e}vy path integrals, Phys. Lett. A., 268 (2000), 298-304.

\bibitem{Laskin2002} N. Laskin,   Fractional Schr\"{o}dinger equation,  Phys. Rev. E., 66 (2002), 056108.

\bibitem{Lenzmann2007}  E. Lenzmann,  Well-posedness for semi-relativistic Hartree equations of critical type, Math. Phys. Anal. Geom., 10 (2007), 43-64.



\bibitem{LenzmannLewin2010} E. Lenzmann,  and  M. Lewin,    Minimizers for the Hartree-Fock-Bogoliubov
 theory of neutron stars and white dwarfs, Duke Math. J.,  152 (2010),  257-315.

\bibitem{Liu-Ma2016}  B. Liu and  L. Ma, Radial symmetry results for fractional Laplacian systems, Nonlinear Aanl., 146 (2016), 1220-1235.




\bibitem{sun-zhengarxiv} C. Sun and J. Zheng, Scattering below ground state of 3D focusing cubic fractional Schr\"odinger equation with radial data,
arXiv: 1702.03148.





 \bibitem{ZhangZhu2015} J. Zhang and  S.H. Zhu, Stability of standing waves for the nonlinear fractional Schr\"{o}dinger equation, J. Dyn. Differential Equations, DOI: 10.1007/s10884-015-9477-3 (2015).

 \bibitem{Zhu2016} S.H. Zhu,  On the blow-up solutions for the  nonlinear fractional Schr\"{o}dinger equation, J. Differential Equations, 261  (2016), 1506-1531.


 \bibitem{Zhu20162} S.H. Zhu,  Existence of stable standing waves for the fractional Schr\"{o}dinger equations with Combined Nonlinearities, J. Evolution Equations,  DOI: 10.1007/s00028-016-0363-1 (2016).


\end{thebibliography}
\end{document}